\theoremstyle{plain}
\newtheorem{theorem}[equation]{Theorem}
\newtheorem{lemma}[equation]{Lemma}
\newtheorem{proposition}[equation]{Proposition}
\newtheorem*{Theorem}{Theorem}
\newtheorem*{WACZ}{Weighted Atiyah Conjecture}
\newtheorem*{RAWACZ}{Right-Angled Weighted Atiyah Conjecture}
\theoremstyle{remark}
\newtheorem{remark}[equation]{Remark}
\newtheorem*{Remark}{Remark}
\numberwithin{equation}{section}
\newcommand{\tr}{\operatorname{tr}}
\newcommand{\diag}{\operatorname{diag}}
\newcommand{\pr}{\operatorname{pr}}
\newcommand{\<}{\langle}
\renewcommand{\>}{\rangle}
\newcommand{\bbR}{\mathbb R}
\newcommand{\bbZ}{\mathbb Z}
\newcommand{\bbQ}{\mathbb Q}
\newcommand{\bbC}{\mathbb C}
\newcommand{\LG}{L^2_{\bq}G}
\newcommand{\LW}{L^2_{\bq}W}
\renewcommand{\NG}{{\mathcal N}_{\bq}G}
\newcommand{\NW}{{\mathcal N}_{\bq}W}
\newcommand{\cN}{\mathcal N}
\newcommand{\cS}{\mathcal S}
\newcommand{\cH}{\mathcal H}
\newcommand{\cA}{\mathcal A}
\renewcommand{\bar}[1]{\overline{#1}}
\renewcommand{\Re}{\operatorname{Re}}
\newcommand{\bq}{\mathbf q}
\newcommand{\ttau}{\tilde{\tau}}
\newcommand{\bv}{\nu} 
\newcommand{\bm}{\mathbf m}
\newcommand{\bn}{\mathbf n}
\newcommand{\bk}{\kappa}
\newcommand{\tk}{\tilde{\kappa}}
\newcommand{\tu}{\tilde{\kappa}_{\emptyset}}
\title[The Atiyah conjecture for the infinite dihedral group]
{The Atiyah conjecture for the Hecke algebra of
  the infinite dihedral group}
\author[B.~Okun]{Boris Okun}
\address{Department of Mathematical Sciences\\
University of Wisconsin\\
Milwaukee, WI 53201}
\email{okun@uwm.edu}
\author[R.~Scott]{Richard Scott}
\thanks{The second author was supported in part by an IBM Research Grant awarded by Santa Clara University}
\address{Department of Mathematics and Computer Science\\
Santa Clara University\\
Santa Clara, CA  95053}
\email{rscott@scu.edu}
\keywords{Atiyah conjecture, Hecke algebra, right-angled Coxeter group}
\subjclass[2010]{20F55, 20F65, 16S34}
\begin{document}

\begin{abstract}
We prove a generalized version of the Strong Atiyah Conjecture for the
infinite dihedral group $W$, replacing the group von Neumann
algebra $\cN W$ with the Hecke--von Neumann algebra $\NW$.
\end{abstract}

\maketitle

\section{Introduction}

Let $W$ be a discrete group, let $\bbR W$ denote its group algebra
over $\bbR$, and let $L^2 W$ denote the Hilbert space completion of
$\bbR W$ with respect to the standard inner product.  Let $\cN W$ be
the von Neumann algebra obtained by taking the bounded operators on $L^2 W$
that commute with the right $\bbR W$-action.  We regard $\cN W$ as an
algebra of (left) operators on $L^2W$.  Then any closed $\bbR
W$-invariant subspace  $V\subseteq (L^2 W)^n$ has a well-defined
von Neumann dimension, which we denote by $\dim_{W}V$.  Examples of
such subspaces arise naturally in $L^2$-homology calculations as
kernels and image closures of equivariant boundary maps and
laplacians, all of which can be represented as right-multiplication by
matrices with entries in $\bbQ W$, the rational group ring.  
The Atiyah Conjecture asserts that any invariant subspace of the
form $\ker R_M$ where $R_M:(L^2W)^n\rightarrow(L^2W)^m$ is right
multiplication by a matrix $M$ with entries in $\bbQ W$ will have
rational von Neumann dimension.  In full generality this conjecture is
false; a counterexample was first given by Austin 
\cite{A}, see also \cite{Gr, PSZ}.  In all of these counterexamples
the group has finite subgroups of arbitrarily large order.  For groups
with bounded torsion, a stronger form of the conjecture, which
specifies denominators of these rational dimensions, is still open. 
Namely, if $\Lambda$ denotes the additive subgroup of $\bbR$ generated by $\{1/|H|\}$ where
$H$ ranges over finite subgroups of $W$, then the Strong Atiyah
Conjecture asserts that $\dim_W\ker R_M\in\Lambda$.  

In the case where $W$ is a right-angled Coxeter group $W$, the
Strong Atiyah Conjecture was recently settled by Linnell, Okun
and Schick \cite{LOS}. It remains open for arbitrary Coxeter groups.
Here we consider a version of Atiyah's conjecture that makes sense for
Hecke algebras.  We let $W$ be a Coxeter group 
with standard generating set $S$, and let $\bbR_{\bq} W$ denote the
Hecke algebra corresponding to $W$ with real deformation multiparameter
$\bq=(q_s)_{s\in S}$ (as usual, we require $q_s=q_t$ whenever $s$ and
$t$ are conjugate in $W$).  This algebra has a canonical $\bbR$-basis 
$\{T_w\;|\; w\in W\}$, and multiplication determined by 
\[T_sT_w=\left\{\begin{array}{ll}
T_{sw}&\mbox{if $|sw|>|w|$}\\ 
(q_s-1)T_{w}+q_sT_{sw} &\mbox{if $|sw|<|w|$}\end{array}\right.\]
for all $s\in S$, and $w\in W$.  We let $q^w$ denote the product
$q_{s_1}\cdots q_{s_n}$ where $s_1\cdots s_n$ is a reduced expression
for $w$.  It follows from Tits' solution to the word problem for $W$
that $q^w$ is independent of the choice of reduced expression.  The
algebra $\bbR_{\bq}W$ can be regarded as a deformation of the group
algebra $\bbR W$, and the canonical inner product on $\bbR W$ deforms
to the inner product on $\bbR_{\bq} W$ defined by
$\<T_w,T_{w'}\>=q^w\delta_{w,w'}$ for all $w,w'\in W$. In particular,
the basis elements $T_w$ are orthogonal, and left and right
multiplication by $T_s$  (for $s\in S$) are self-adjoint operators. 
We let $\LW$ denote the Hilbert space completion with respect to
this inner product.  Again one obtains a von Neumann algebra, which we
denote by $\NW$, by taking the bounded operators on $\LW$ that commute
with the right $\bbR_{\bq}W$-action.  And again one obtains von
Neumann dimensions for closed $\bbR_{\bq} W$-invariant subspaces
$V\subseteq(L^2_{\bq} W)^n$.  We denote this dimension by
$\dim_W^{\bq}V$.  

To motivate the algebraic formulation of the Atiyah Conjecture in the
context of Hecke algebras, we recall some properties of Coxeter
groups and reflection actions (a good reference for this material is
\cite{Davis-book}[Chapter 20]).  A subet $J\subseteq S$ is called {\em
  spherical} if the parabolic subgroup $W_J$ generated by $J$ is finite, and we
let $\cS$ denote the set of spherical subsets of $S$.  For any
$J\in\cS$, we let $W_J({\bq})$ denote the growth series (a polynomial
in this case) of $W_J$ defined by  
\[W_J({\bq})=\sum_{w\in W_J} q^w,\]
and we let $a_J$ be the element of $\bbR_{\bq} W$ defined by 
\[a_J=\frac{1}{W(\bq)}\sum_{w\in W_J} T_w.\]
Right-multiplication by $a_J$ defines an orthogonal projection
from $\LW$ onto a closed (left) $\bbR_{\bq} W$-invariant subspace,
which we denote by $A_J$.  The von-Neumann dimension of this subspace is 
\[\dim_W^{\bq}A_J=\frac{1}{W_J({\bq})}.\]

Given a reflection action of $W$ on a CW-complex $X$, there is a
corresponding cochain complex of $\NW$-modules, and the
``weighted'' $L^2_{\bq}$-Betti numbers of $X$ are defined as the
von-Neumann dimensions of the corresponding cohomology groups.  In
\cite{DDJO}[Section 7] it is proved that these Betti numbers are
continuous with respect to the multiparameter $\bq$ and, in light of
Atiyah's question, the authors ask whether or not these Betti numbers
are piecewise rational functions.  A purely algebraic version of the question can
be obtained by first noting that the $\NW$-modules in the weighted chain
complex all decompose into orthogonal direct sums of $A_J$'s, and the
boundary and coboundary maps can all be represented by matrices whose  
entries are $\bbZ$-linear combinations of the ${a_J}'s$. 
\footnote{In \cite{Davis-book}, \cite{DDJO}, and  \cite{Dymara}, the
  boundary map formula has coefficients involving the parameters $\bq$ and square roots.
  However, if one scales the $L^2$ norms of the cells in 
  each orbit appropriately, and expresses the boundary map in terms of
  the projection operators $a_J$, the coefficients all become integers.
  The weighted Betti numbers remain unchanged by this scaling.}

To get an algebraic formulation of the conjecture, we replace boundary
and coboundary maps with a suitable class of matrices, and ask about
von-Neumann dimensions of the kernels.  To have a
canonical specialization of each matrix for different values of the
multiparameter $\bq$, we let $\bbQ(\bq)$ denote the formal ring of
rational functions in the indeterminates $q_s$, $s\in S$, and we
define $\cH W$ to be the abstract Hecke algebra over $\bbQ(\bq)$ with
generators  
$T_w$, $w\in W$, and the same multiplication rules given above for
$\bbR_{\bq}W$.  (To avoid extra notation, we
use the same symbols $\{q_s\}$ both for formal indeterminates and for
  real parameters.)  By allowing polynomial denominators, all of the
projections $a_J$ are well-defined elements of $\cH W$, and we let
$\cA W$ denote the subalgebra they generate.  Since denominators in
$\cA W$ will always be polynomials with non-negative coefficients,
there will be no division by zero problems when specializing to any
multiparameter $\bq\in(\bbR_{>0})^S$.  

\begin{WACZ}
Let $M$ be an $n\times m$ matrix with entries in $\cA W$ and for any
multiparameter $\bq\in(\bbR_{>0})^S$, let $M_{\bq}$ denote the
specialization of this matrix to $\bbR_{\bq}W$.  Then the von Neumann 
dimension of the kernel of right multiplication by $M_{\bq}$ on
$(L^2_{\bq}W)^n$ is a piecewise rational function of 
the form \[\dim_W^{\bq}\ker
R_{M_{\bq}}=\sum_{J\in\cS}\frac{n_J({\bq})}{W_J({\bq})}\]  
where the numerators $n_J$ are piecewise-constant integer functions of
$\bq$.
\end{WACZ}    

One complication in trying to establish this conjecture is that,
in general, subgroups of $W$ do not correspond to subalgebras of
$\bbR_{\bq} W$.  If $W$ is right-angled, however, there is a canonical
isomorphism between $\bbR_{\bq}W$ and the ordinary group algebra $\bbR
W$ (see \cite{OS} and Section~\ref{s:RAHvN}, below).  Thus, for any
subgroup $G\subseteq W$, there is a canonical subalgebra $\bbR_{\bq}
G\subseteq \bbR_{\bq} W$ isomorphic to the group subalgebra $\bbR
G\subseteq\bbR W$.  Moreover, because this isomorphism is induced by
identifying the idempotents $a_J$ in $\bbR W$ with those in
$\bbR_{\bq}W$, the statement of the Weighted Atiyah
Conjecture in the right-angled setting takes a slightly simpler form 
(which we give at the end of Section~\ref{s:RAHvN}).

The point of this paper is to establish the conjecture for the first
nontrivial example in the right-angled setting, namely, when $W$ is
the infinite dihedral group.  Although the result is admittedly
limited in scope, the proof is surprisingly subtle and much more
involved than the corresponding result in the Coxeter group setting.
In what follows, we assume $W$ is the infinite dihedral group with
generators $s$ and $t$, and we let $G$ be the infinite cyclic subgroup
of index $2$ generated by $st$. The proof of the (non-weighted) Atiyah
Conjecture for $W$ boils down to two facts.  First, if
$V\subseteq(L^2W)^n$ is a left $\bbR W$-invariant closed subspace then
$\dim_GV=2\dim_WV$.  This follows from the orthogonal decomposition          
\[L^2 W=L^2 G\oplus(L^2G)s\cong(L^2 G)^2.\]
And second, (right) multiplication in $L^2 G$ by a nonzero element
of the group algebra $\bbR G$ has trivial kernel. This follows from a
Fourier series argument.  When $q_s\neq 1$ or $q_t\neq 1$, the
argument breaks down in two places: first, $\LG$ and
$(\LG)s$ are not orthogonal, and second, $\LG$ has
nontrivial submodules of the form $\ker R_M$.  We address these
difficulties by describing a finer orthogonal decomposition of
$\LW$.  We then prove the following case of the Weighted Atiyah
Conjecture.     

\begin{Theorem}
Let $W$ be the infinite dihedral group $\<s,t\:|\; s^2=t^2=1\>$, and
let $M$ be a matrix with entries in $\cA W$.  Then for any
multiparameter $\bq=(q_s,q_t)$, we have 
\[\dim_W^{\bq}\ker
R_{M_{\bq}}=n_{\emptyset}+\frac{n_s}{1+q_s}+\frac{n_t}{1+q_t}\]
where $n_{\emptyset},n_s,n_t$ are piecewise constant integer functions of $\bq$. 
\end{Theorem}

To make the paper easier to follow, we outline here the key steps in
the proof of the main theorem.  The first step is to identify $\bbR_q
W$ with $\bbR W$ using the canonical isomorphism and then to pass to
the subalgebra $\bbR G$ where $G$ is the free abelian subgroup of $W$
generated by the translation $st$.  The advantage of $\bbR G$ over
$\bbR W$ is that the former is isomorphic to the commutative ring of
Laurent polynomials, and 
matrices over this ring are easier to work with.  We then consider the
action of the group generator $st$ on $\LG$, 
letting $K_+$ and $K_-$ denote the $+1$ and $-1$-eigenspaces,
respectively.  We obtain an orthogonal decomposition  
\[\LG=K_+\oplus K_-\oplus K_{\emptyset}\]
where $K_{\emptyset}$ is the orthogonal complement of $K_+$ and $K_-$.  
We then show that right multiplication by any element $y\in\bbR G$, 
restricted to any of these three summands, is either an isomorphism or
the zero map (Proposition~\ref{prop:Virred}).  This follows from two
facts.  First, being a Laurent poynomial in one variable, $y$ factors
into linear factors over $\bbC$.  Second, $+1$ and $-1$ are the only
complex eigenvalues for the action of  $st$ on $\LG$.
Section~\ref{s:GonLW} is devoted entirely to this second fact, which is
the main technical result of the paper. 

We then extend this decomposition to $\LW$, proving that 
\begin{align}\label{al:dec}
\LW & = K_+\oplus K_-\oplus K_{\emptyset}\oplus K_{\emptyset}s
\end{align}
as $\NG$-modules (Proposition~\ref{prop:W-decomp}).

\begin{Remark}
For any Coxeter group $W$, Davis et al. \cite[Theorem~9.11]{DDJO}
prove a decomposition 
theorem for $\LW$ that generalizes the
decomposition of Solomon \cite{So} for finite Coxeter groups (and the
ordinary group algebra).  In the case of
the infinite dihedral group, the two subspaces $K_+$ and $K_-$ in our
decomposition are not just $\NG$-modules, but they are also
$\NW$-modules, and can be used to give an even finer decomposition of
$\LW$ than that in \cite{DDJO}. The subspace  $K_+$ corresponds to
either the constant functions or ``harmonic'' functions (denoted by
$A^S$ or $H^S$, respectively, in \cite{DDJO}), but the invariant subspace $K_-$ is
new. It can be regarded as the image of $K_+$ under one of the
``partial $j$'' automorphisms described in \cite[Section~9]{OS} and is
a proper invariant subspace of one of the summands in the decomposition of Davis et al.  
\end{Remark}

Given an $\bbR W$-invariant subspace $V\subseteq(\LW)^n$, we obtain a
corresponding decomposition  
\[V=V_+\oplus V_-\oplus V_{\emptyset}\]
where $V_+\subseteq (K_+)^n$, $V_-\subseteq (K_-)^n$, and
$V_{\emptyset}\subseteq(K_{\emptyset}\oplus K_{\emptyset}s)^n$ (Proposition~\ref{prop:V-W-decomp}).  We
then prove that if $V$ is the kernel of an $\bbR W$-matrix, then as
$\NG$-modules we have isomorphisms, 
\[V_+\cong (K_+)^a,\;\; V_-\cong (K_-)^b,\;\; V_{\emptyset}\cong
(K_{\emptyset})^{c}\]
where $a,b,c$ are nonnegative integers (Lemmas~\ref{lem:G-main} and \ref{lem:kerW2G}).  The proof of this requires
one to first show that right multiplication by an $\bbR W$-matrix
corresponds to right multiplication by an $\bbR G$-matrix with respect
to the decomposition (\ref{al:dec}) above, and then to use the fact
that matrices over Laurent polynomial rings are essentially
diagonalizable.   This means that right multiplication by an $\bbR
G$-matrix on any of the subspaces $(K_+)^n$, $(K_-)^n$, or
$(K_{\emptyset}\oplus K_{\emptyset}s)^n\cong (K_{\emptyset})^{2n}$ reduces to the $1$-dimensional case, where (by Proposition~\ref{prop:Virred}, mentioned above), the kernel is either trivial or the entire space.    

Finally, we calculate the $\NG$-dimensions of the
modules $V_+\cong (K_+)^a$, $V_-\cong (K_-)^b$, and $V_{\emptyset}\cong
(K_{\emptyset})^{c}$ (Lemma~\ref{lem:Gdims}), relate these to their
$\NW$-dimensions (Lemma~\ref{lem:dimG2W}), and then complete the proof (Theorem~\ref{thm:final}).     

\section{Hecke--von Neumann algebras for right-angled Coxeter groups}\label{s:RAHvN}

Let $W$ be a right-angled Coxeter group with generating set $S$, and
let $\bq=(q_s)_{s\in S}$ be a real-valued $S$-tuple satisfying $q_s>0$
for all $s\in S$.  We let $\bbR_{\bq}W$ denote the corresponding Hecke
algebra and note that in addition to the multiplication
formulas from the introduction 
\[T_sT_w=\left\{\begin{array}{ll}
T_{sw}&\mbox{if $|sw|>|w|$}\\ 
(q_s-1)T_{w}+q_sT_{sw} &\mbox{if $|sw|<|w|$}\end{array}\right.,\]
there are analogous right-multiplication formulas
\[T_wT_s=\left\{\begin{array}{ll}
T_{ws}&\mbox{if $|ws|>|w|$}\\ 
(q_s-1)T_{w}+q_sT_{ws} &\mbox{if $|ws|<|w|$}\end{array}\right..\]

In a previous paper, the authors noted that for right-angled Coxeter
groups, there is a canonical isomorphism $\phi:\bbR
W\rightarrow\bbR_{\bq} W$ of $\bbR$-algebras induced by  
\[\phi(s)=\frac{1-q_s}{1+q_s}+\frac{2}{1+q_s}T_s\]
for all $s\in S$ (see \cite{OS}[Corollary~9.7]).  This isomorphism is
induced by mapping each of the idempotents $a_s=\frac{1+s}{2}$ in
$\bbR W$ to the corresponding idempotent
$a_s=\frac{1+T_s}{1+q_s}\in\bbR_{\bq}W$.  In fact, (and this is unique
to the right-angled setting) for any spherical subset $J\in\cS$, one has 
\[\phi(a_J)=a_J.\]

The Hecke algebra $\bbR_{\bq} W$ has an $\bbR$-basis $\{T_w\}$
canonically indexed by elements of $W$: each $T_w$ is a product
$T_w=T_{s_1}\cdots T_{s_n}$ where $s_1\cdots s_n$ is a reduced
expression for $w$.  We let 
$\tau_w=\phi^{-1}(T_w)$, keeping in mind that $\tau_w$ depends on the
choice of $\bq$.  We then have two bases $\{w\;|\;w\in W\}$ and
$\{\tau_w\;|\;w\in W\}$ for the group algebra $\bbR W$ (which coincide
if and only if $q_s=1$ for all $s\in S$).  Throughout the paper, we
shall denote the unit element $\tau_1=\phi^{-1}(T_1)$ by $1$ and
identify $\bbR$ with the constants $\bbR\tau_1\subseteq\bbR
W$.  From the definition of $\phi$ we have, for all $s\in S$,  
\begin{align}\label{al:s-to-taus}
s=\frac{1-q_s}{1+q_s}+\frac{2}{1+q_s}\tau_s,
\end{align}
and since $\phi$ is an algebra isomorphism, the multiplication
formulas for the Hecke basis $T_w$ correspond to the same formulas
for the $\tau_w$ basis in the group algebra, namely
\begin{equation}\label{eq:tau-mult}
\tau_s\tau_w=\left\{\begin{array}{ll}
\tau_{sw}&\mbox{if $|sw|>|w|$}\\ 
(q_s-1)\tau_{w}+q_s\tau_{sw} &\mbox{if $|sw|<|w|$}\end{array}\right.
\end{equation}
and 
\begin{equation}
\tau_w\tau_s=\left\{\begin{array}{ll}
\tau_{ws}&\mbox{if $|ws|>|w|$}\\ 
(q_s-1)\tau_{w}+q_s\tau_{ws} &\mbox{if $|ws|<|w|$}\end{array}\right..
\end{equation}

Pulling back the inner product on $\bbR_{\bq} W$ from the introduction, we
obtain, a corresponding inner product $\<,\>_{\bq}$ on the group algebra
$\bbR W$.  This inner product is given by   
\[\<\tau_w,\tau_{w'}\>_{\bq}=\<T_w,T_{w'}\>=q^w\delta_{w,w'}\]
for all $w,w'\in W$.  

We then identify the Hilbert space completion $L^2_{\bq} W$
with the completion of the group algebra $\bbR W$ with 
respect to the inner product $\<,\>_{\bq}$.  As in
\cite[Section~19.2]{Davis-book}, one obtains a von Neumann algebra
$\NW$ of (left) operators on $L^2_{\bq} W$ by  
taking all bounded operators that commute with the right $\bbR
W$-action.  Alternatively, we say that an element $x\in\LW$ is {\em
bounded} if there is some constant $C$ such that $\|xy\|\leq C\|y\|$
for all $y\in\bbR W$.  The von Neumann algebra $\NW$ can then be
identified with the weak closure of the subset of $\LW$ consisting of
bounded elements acting on the left of $\bbR W$.  (Similarly, there is
a von Neumann algebra of right operators on $\LW$, which we
also denote by $\NW$.  The context will usually determine which
algebra we are using.)  

A basic fact we shall need about the inner product 
$\<,\>_{\bq}$ on $L^2_{\bq} W$ is that for any generator $s\in S$, left and
right multiplication by $s$ and $\tau_s$ are self-adjoint.   

\begin{proposition}\label{prop:gens-adjoint} For any $s\in S$ and $x,y\in\LW$,
\[\<sx,y\>_{\bq}=\<x,sy\>_{\bq} \;\;\mbox{and}\;\;\<xs,y\>_{\bq}=\<x,ys\>_{\bq}\]
and 
\[\<\tau_s x,y\>_{\bq}=\<x,\tau_s y\>_{\bq}
\;\;\mbox{and}\;\;\<x\tau_s,y\>_{\bq}=\<x,y\tau_s\>_{\bq}.\]
\end{proposition}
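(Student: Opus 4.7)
\vspace{2ex}
\noindent\textbf{Proof plan.}
The plan is to prove the four identities by first reducing to the case of basis vectors and then reducing self-adjointness of $s$ to self-adjointness of $\tau_s$. Since both sides of each proposed identity are continuous and sesquilinear in $x,y$, it suffices to verify each identity when $x=\tau_w$ and $y=\tau_{w'}$ range over the orthogonal basis $\{\tau_u\mid u\in W\}$. Also, from the formula
\[s=\frac{1-q_s}{1+q_s}+\frac{2}{1+q_s}\tau_s\]
(equation~(\ref{al:s-to-taus})) we see that left (resp.\ right) multiplication by $s$ is a real linear combination of the identity and left (resp.\ right) multiplication by $\tau_s$; the identity is trivially self-adjoint, so the $s$-identities follow at once from the $\tau_s$-identities. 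Thus the whole proposition reduces to verifying the two relations $\langle\tau_s\tau_w,\tau_{w'}\rangle_{\bq}=\langle\tau_w,\tau_s\tau_{w'}\rangle_{\bq}$ and $\langle\tau_w\tau_s,\tau_{w'}\rangle_{\bq}=\langle\tau_w,\tau_{w'}\tau_s\rangle_{\bq}$.

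I would verify the first of these by splitting into four cases according to whether $|sw|$ is greater or less than $|w|$ and likewise for $w'$. The key numerical fact used throughout is that if $|sw|>|w|$ then $s$ can be prepended to any reduced expression for $w$, so $q^{sw}=q_s q^w$. In the ``both longer'' and ``both shorter'' cases, the multiplication formula~(\ref{eq:tau-mult}) and the parity of lengths force all of the resulting Kronecker deltas to match and both sides to reduce to the same scalar (in the ``both shorter'' case only the diagonal term $(q_s-1)q^w\delta_{w,w'}$ survives, because $sw=w'$ would force $|sw'|>|w'|$ contradicting the case hypothesis). In the mixed case $|sw|>|w|$, $|sw'|<|w'|$, the only way to get a nonzero term is $sw=w'$ (equivalently $w=sw'$), and then both sides evaluate to $q_sq^w$ using $q^{w'}=q^{sw}=q_sq^w$. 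The fourth case is symmetric to the mixed case. The right-multiplication identity for $\tau_s$ is proved in exactly the same way, using the right-multiplication formula from the opening of Section~\ref{s:RAHvN} together with the analogous length/parameter identity $q^{ws}=q_sq^w$ when $|ws|>|w|$.

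The only mildly delicate point — really the whole content of the proposition — is the bookkeeping in the mixed case, where the two sides of the inner product look asymmetric at first glance: one side has just a single basis vector $\tau_{sw}$, while the other side is a linear combination $(q_s-1)\tau_{w'}+q_s\tau_{sw'}$. The match comes out only because the ``square factor'' of $q_s$ that appears to be extra on one side is absorbed by the difference $q^{sw}=q_sq^w$ between the weights of $\tau_{sw}$ and $\tau_w$; this is precisely where the $\bq$-deformed inner product $\langle\tau_u,\tau_v\rangle_{\bq}=q^u\delta_{u,v}$ is calibrated to make $\tau_s$ self-adjoint. Once this check is done, extending from basis elements to arbitrary $x,y\in\LW$ is immediate by continuity of the inner product and boundedness of the relevant operators on $\LW$.
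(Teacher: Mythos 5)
Your proof is correct, but it takes a different route from the paper. The paper does not compute anything on the basis: it invokes Dymara's result that $(\bbR_{\bq}W,\langle\cdot,\cdot\rangle,\ast)$ with $T_w^{\ast}=T_{w^{-1}}$ is a Hilbert algebra in the sense of Dixmier, so that left (resp.\ right) multiplication by $x^{\ast}$ is automatically the adjoint of left (resp.\ right) multiplication by $x$; the proposition then follows from the single observation that $s^{\ast}=s^{-1}=s$ and $\tau_s^{\ast}=\tau_{s^{-1}}=\tau_s$. Your argument instead verifies the adjoint relations by hand: reduce the $s$-identities to the $\tau_s$-identities via $s=\frac{1-q_s}{1+q_s}+\frac{2}{1+q_s}\tau_s$, then check $\langle\tau_s\tau_w,\tau_{w'}\rangle_{\bq}=\langle\tau_w,\tau_s\tau_{w'}\rangle_{\bq}$ case by case on the lengths $|sw|,|sw'|$. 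Your case analysis is sound: in the ``both longer'' and ``both shorter'' cases the cross deltas vanish because $sw=w'$ forces opposite length behavior at $w$ and $w'$, and in the mixed case the identity $q^{sw}=q_sq^w$ is exactly what balances the extra factor $q_s$ coming from the term $q_s\tau_{sw'}$ in the multiplication rule. What the paper's route buys is brevity and generality (the Hilbert algebra structure gives the adjoint of multiplication by \emph{any} $x$, namely multiplication by $x^{\ast}$, which is implicitly used elsewhere, e.g.\ in $(st)^{\ast}=ts$); what your route buys is self-containedness and an explicit demonstration that the weights $q^w$ in the inner product are precisely calibrated to make $\tau_s$ self-adjoint. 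The one point you should not leave as a pure assertion is the boundedness of left and right multiplication by $\tau_s$ on $\LW$ (needed both for $sx$ to be defined for arbitrary $x\in\LW$ and for the density argument); a one-line justification, e.g.\ that in the orthonormal basis $\{\ttau_w\}$ the matrix of $\tau_s$ has at most two nonzero entries in each row and column, all uniformly bounded, would close this.
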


\begin{proof}{}
In \cite[Proposition~2.1]{Dymara}, any Hecke algebra $\bbR_{\bq}W$,
together with the involution $\ast$ defined by $T_w^*=T_{w^{-1}}$ and
the inner product defined by $\<T_w,T_{w'}\>=q^w\delta_{w,w'}$, is shown
to satisfy the axioms for a Hilbert algebra structure in the sense of
Dixmier \cite{Dix}.  In particular, for any $x\in\bbR_{\bq}W$, left
(respectively, right) multiplication by $x^{\ast}$ is the adjoint of
left (resp., right) multiplication by $x$ with respect to $\<,\>$.  When
$W$ is right-angled, the isomorphism $\phi^{-1}:\bbR_{\bq}W\rightarrow
\bbR W$ induces a Hilbert algebra structure on $\bbR W$ where the
inner product is $\<,\>_{\bq}$ and the $\ast$-involution is given by
$w^{\ast}=w^{-1}$ on the $\{w\}$ basis and
$\tau_w^{\ast}=\tau_{w^{-1}}$ on the $\{\tau_w\}$ basis.  Thus,
$s^{\ast}=s$, and $\tau_s^{\ast}=\tau_s$ for all $s\in S$. 
\end{proof}

For any positive integer $n$, we let $(L^2_{\bq} W)^n$ denote the Hilbert space direct sum of $n$ copies of
$L^2_{\bq} W$, and we let $\epsilon_1,\ldots,\epsilon_n$ denote the
standard basis; in other words $\epsilon_i=(0,\ldots,0,1,0,\ldots,0)$ where the
$1$ in the $i$th position represents the element $1\in\bbR W$.
Any closed (left) $\bbR W$-invariant subspace  $V\subseteq (L^2_{\bq} W)^n$
will be called a {\em Hilbert $\NW$-module}, and has {\em von Neumann
dimension} defined by   
\[\dim_W^{\bq}V=\sum_{i=1}^{n}\<\pr_V(\epsilon_i),\epsilon_i\>_{\bq}\]
where $\pr_V:(L^2_{\bq} W)^n\rightarrow V$ is orthogonal projection
onto $V$.  An {\em isomorphism} of Hilbert modules is an $\bbR
W$-equivariant Hilbert space isomorphism. Isomorphic Hilbert modules
have the same von Neumann dimension (see e.g.,
\cite[Theorem~1.12]{Lueck}).  Similarly, if $G$ is any subgroup of
$W$, we can restrict the 
inner product $\<,\>_{\bq}$ to $\bbR G$.  The Hilbert space completion
$L^2_{\bq} G$ can then be identified with the closure of $\bbR G$ in
$L^2_{\bq} W$.  As above, one defines the von Neumann algebra $\NG$
to be the algebra of bounded operators on $L^2_{\bq} G$ that commute
with the right $\bbR G$-action.  A Hilbert $\NG$-module $V$ is defined
by replacing $W$ with $G$ in the previous paragraph, and its von
Neumann dimension will be denoted by $\dim_G^{\bq}V$.

With this identification of $\LW$ (for {\em any} $\bq$) with a
suitable completion of the ordinary group algebra $\bbR W$, the
statement of the Weighted Atiyah Conjecture is simplified.  In
particular, the specialization homomorphism $\cA
W\rightarrow\bbR_{\bq}W$, when composed with the isomorphism
$\phi^{-1}:\bbR_{\bq} W\rightarrow\bbR W$ is independent of $\bq$.
This means that {\em for all $\bq$}, we can regard $M$ as a matrix
with entries in the rational group algebra $\bbQ W$.  Clearing
denominators, we obtain the following. 

\begin{RAWACZ}
Let $W$ be a right-angled Coxeter group, and let $M$ be an $n\times m$
matrix with entries in the integer group ring $\bbZ W$.  Then 
\[\dim_W^{\bq}\ker R_M=\sum_{J\in\cS}\frac{n_J({\bq})}{W_J({\bq})}\]  
where the numerators $n_J$ are piecewise-constant integer functions of
$\bq$.
\end{RAWACZ}

\section{The $G$-action on $\LW$}\label{s:GonLW}

For the remainder of the paper $W$ will be the infinite dihedral
group with standard generators $s$ and $t$.  We let $G$ be the
infinite cyclic subgroup generated by the product $st$, and we
consider the operator on $L^2_{\bq} W$ defined by right
multiplication by $st$.  We shall prove that the only possible
eigenvalues for this operator are $1$ and $-1$ (and even these may or
may not occur depending on the values of the parameters $q_s$ and
$q_t$).  The same 
result holds for left multiplication by $st$, as well, with
the same resulting eigenvalues and eigenvectors, but we shall omit the
argument since it is virtually identical to that for right-multiplication.

We work both with the orthogonal basis $\{\tau_w\}$ for
$\LW$ and the orthonormal basis $\{\ttau_w\}$ defined by  
\[\ttau_w=(1/\sqrt{q^w})\tau_w.\]
For the $\bbR W$-action on $\LW$, we introduce the special
elements $a_s$ and $a_t$ defined by 
\begin{align}\label{al:as}
a_s:=\frac{1+s}{2}=\frac{1+\tau_s}{1+q_s}\;\;\mbox{and}\;\; a_t:=
\frac{1+t}{2}=\frac{1+\tau_t}{1+q_t}
\end{align}
(the equations follow from (\ref{al:s-to-taus})).

One checks easily using the fact that $s^2=1$ and $t^2=1$ that $a_s$ and
$a_t$ are self-adjoint idempotents, as are their complements
$h_s=1-a_s$ and $h_t=1-a_t$.  The latter are 
given in terms of the bases $\{w\}$ and $\{\tau_w\}$ by 
\begin{align}\label{al:hs}
h_s=\frac{1-s}{2}=\frac{q_s-\tau_s}{1+q_s}\;\;\mbox{and}\;\; h_t=
\frac{1-t}{2}=\frac{q_t-\tau_t}{1+q_t}.
\end{align}
Our first step is to replace the operator $st$ with $a_s-a_t$.  

\begin{lemma}\label{lem:lambda2mu}
The vector $\bv\in\LW$ is an eigenvector for $st$ with
eigenvalue $\lambda$ if and only if $\bv$ is an eigenvector for
$a_s-a_t$ with eigenvalue 
\[\mu=\pm\sqrt{\frac{1}{2}-\frac{1}{2}\Re\lambda}.\]
\end{lemma}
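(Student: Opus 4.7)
The plan is to reduce the eigenvalue problem for right multiplication by $st$ to one for the self-adjoint element $a_s-a_t$, via the algebraic identity
\[st+ts=2-4(a_s-a_t)^2\]
in $\bbR W$. I would verify this by direct expansion: since $a_s-a_t=(s-t)/2$ and $s^2=t^2=1$, we have $(a_s-a_t)^2=(s^2-st-ts+t^2)/4=(2-st-ts)/4$. Passing to the right-multiplication anti-representation of $\bbR W$ on $\LW$ yields the operator identity $R_{st}+R_{ts}=2I-4R_{a_s-a_t}^2$.

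Next, I would record two structural facts. By Proposition~\ref{prop:gens-adjoint}, $R_s$ and $R_t$ are self-adjoint, and since $s^2=t^2=1$ they are self-adjoint involutions, hence unitary. Consequently $R_{st}=R_tR_s$ is unitary with $R_{st}^*=R_{ts}$, so its eigenvalues lie on the unit circle. Meanwhile $R_{a_s-a_t}=R_{a_s}-R_{a_t}$ is a real combination of self-adjoint projections, hence self-adjoint. For the forward direction, given $R_{st}\bv=\lambda\bv$, unitarity yields $R_{ts}\bv=\bar\lambda\bv$, and summing and applying the operator identity gives
\[R_{a_s-a_t}^2\bv=\frac{1-\Re\lambda}{2}\bv=\mu^2\bv.\]
The spectral theorem for the self-adjoint operator $R_{a_s-a_t}$ then decomposes $\bv$ orthogonally as $\bv=\bv_++\bv_-$ with $R_{a_s-a_t}\bv_\pm=\pm\mu\bv_\pm$. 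For the reverse direction, from $R_{a_s-a_t}\bv=\mu\bv$ and the idempotency $R_{a_s}^2=R_{a_s}$, $R_{a_t}^2=R_{a_t}$, a short computation gives $R_{a_t}R_{a_s}\bv=(1+\mu)R_{a_t}\bv$, and hence
\[R_{st}\bv=(I-2R_{a_s}-2R_{a_t}+4R_{a_t}R_{a_s})\bv=(1-2\mu)\bv+4\mu R_{a_t}\bv.\]

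The main obstacle I anticipate is concluding, in the forward direction, that $\bv$ lies entirely in a single $\pm\mu$ eigenspace of $R_{a_s-a_t}$ (not merely in the direct sum of the two), and dually in the reverse direction, that $R_{a_t}\bv$ is a scalar multiple of $\bv$ with the correct coefficient for $R_{st}\bv=\lambda\bv$. I would address both via the finer operator decomposition $R_{st}=I-2R_{a_s-a_t}^2+2[R_{a_t},R_{a_s}]$ (obtained by rewriting $4R_{a_t}R_{a_s}=2(R_{a_t}R_{a_s}+R_{a_s}R_{a_t})+2[R_{a_t},R_{a_s}]$ and using the identity for $R_{a_s-a_t}^2$). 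Its skew-adjoint commutator piece provides the additional constraint $[R_{a_t},R_{a_s}]\bv=(i\im\lambda/2)\bv$; combined with $R_{a_s-a_t}^2\bv=\mu^2\bv$, this should pin $\bv$ into a single eigenspace of $R_{a_s-a_t}$ and simultaneously fix the action of $R_{a_t}$ on $\bv$, thus completing the equivalence in both directions.
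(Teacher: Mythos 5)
Your reduction via the identity $st+ts=2-4(a_s-a_t)^2$ is exactly the paper's route (and your constant $2$ is the correct one, consistent with the stated formula for $\mu$; the paper's displayed version of this identity has a typo), and you are right to single out the passage from ``$\bv$ is an eigenvector of $(a_s-a_t)^2$ with eigenvalue $\mu^2$'' to ``$\bv$ is an eigenvector of $a_s-a_t$ with eigenvalue $\pm\mu$'' as the delicate point --- the paper's own proof simply asserts this step. The problem is that your proposed repair does not close the gap. Write $P=R_{a_s}$, $Q=R_{a_t}$, $A=P-Q$ and $B=[Q,P]$. Using $P^2=P$ and $Q^2=Q$ one checks directly that $AB+BA=0$, so $B$ maps the $(+\mu)$-eigenspace of $A$ into the $(-\mu)$-eigenspace and vice versa. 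Hence if $\bv=\bv_++\bv_-$ with $A\bv_\pm=\pm\mu\bv_\pm$ and $\mu\neq 0$, your constraint $B\bv=\tfrac{i\im\lambda}{2}\bv$ decouples into $B\bv_+=\tfrac{i\im\lambda}{2}\bv_-$ and $B\bv_-=\tfrac{i\im\lambda}{2}\bv_+$. When $\im\lambda\neq 0$ this forces $\bv_+$ and $\bv_-$ to be both zero or both nonzero --- the opposite of pinning $\bv$ into one eigenspace --- and when $\im\lambda=0$ it merely says $B$ annihilates both components, which excludes neither. No contradiction can be extracted from this operator algebra alone (for instance, unitarity of $R_{st}$ is compatible with both components nonzero, of equal norm). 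So the commutator identity, while correct, cannot do the job you assign to it.

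What actually suffices --- and is all the paper uses downstream --- is the weaker forward statement that $\bv$ lies in $E_{+\mu}\oplus E_{-\mu}$ for $a_s-a_t$, each nonzero component being a genuine $\ell^2$ eigenvector of $a_s-a_t$ with eigenvalue $\pm\mu$. The recurrence analysis of Section~3 applied to either component then yields $\mu\in\{0,\pm1\}$, whence $\Re\lambda=1-2\mu^2=\pm1$ and $|\lambda|=1$ force $\lambda=\pm1$; the single-eigenspace conclusion is recovered only a posteriori, since for each $\bq$ at most one of the $\pm1$-eigenspaces of $a_s-a_t$ is nonzero. Your reverse direction has the same defect: you still must show $R_{a_t}\bv$ is a scalar multiple of $\bv$, and since $R_{a_t}$ is a projection this holds only for $\mu\in\{0,\pm1\}$ --- i.e., for the eigenvalues that actually occur, not for a general hypothetical $\mu$. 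I recommend restating the lemma (or at least the part of it you prove) in this weaker form rather than trying to force the literal ``if and only if.''
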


\begin{proof}{}
Let $\bv$ be an eigenvector for $st$ with eigenvalue $\lambda$.
Since $s$ and $t$ are self-adjoint involutions, $st$ is a unitary
operator with $(st)^*=ts$.  It follows that $|\lambda|=1$.  Moreover,
$\bv$ will be in the kernel of the operator 
\[(st-\lambda)(st-\bar{\lambda})=(st)^2+1-2st\Re\lambda=(st+ts-2\Re\lambda)(st).\]
Since $st$ is invertible, $\bv$ will therefore be an eigenvector for
$st+ts$ with eigenvalue $2\Re(\lambda)$.  Using the definition of
$a_s$ and $a_t$ in (\ref{al:as}), we have $s=2a_s-1$ and $t=2a_t-1$, hence
\[st+ts=4(a_sa_t+a_ta_s)-4(a_s+a_t)+1=1-4(a_s-a_t)^2,\]
where the last expression follows from $a_s^2=a_s$ and
$a_t^2=a_t$.  It follows that $\bv$ is an eigenvector for $a_s-a_t$
with eigenvalue $\pm\sqrt{\frac{1}{2}-\frac{1}{2}\Re\lambda}$.  
Tracing the argument backward gives the reverse implication.
\end{proof} 

Next we compute the action of $a_s-a_t$ on the basis vectors
$\{\tau_w\}$.  To avoid denominators, we let $c=(1+q_s)(1+q_t)$ and
let $R$ be the operator 
\[R=c(a_s-a_t)=(q_t-q_s)+(1+q_t)\tau_s-(1+q_s)\tau_t.\]
Any eigenvector of $a_s-a_t$ with eigenvalue $\mu$ will then be a
nonzero vector in the kernel of $R-c\mu$.  We compute the products 
$\tau_w(R-c\mu)$ using the formulas for
right-multiplication by $\tau_s$ and $\tau_t$: 
\begin{align*}
\tau_1(R-c\mu) & = (q_t-q_s-c\mu)\tau_1+(1+q_t)\tau_s-(1+q_s)\tau_t
\end{align*}
and (for $|ws|>|w|$)
\begin{eqnarray*}
\nonumber \tau_{ws}(R-c\mu) &
=&(q_t-q_s-c\mu)\tau_{ws}+\\
&& (1+q_t)[(q_s-1)\tau_{ws}+q_s\tau_w]-(1+q_s)\tau_{wst}\\ 
&=&-(1+q_s)\tau_{wst}+(q_sq_t-1-c\mu)\tau_{ws}+q_s(1+q_t)\tau_w
\end{eqnarray*}
and (for $|wt|>|w|$)
\begin{eqnarray*} 
\nonumber \tau_{wt}(R-c\mu) &
= &(q_t-q_s-c\mu)\tau_{ws}+\\
&&(1+q_t)\tau_{wts}-(1+q_s)[(q_t-1)\tau_{wt}+q_t\tau_w]\\
&=&(1+q_t)\tau_{wts}-(q_sq_t-1+c\mu)\tau_{wt}-q_t(1+q_s)\tau_w.
\end{eqnarray*}

Using the substitutions $\tau_w=\sqrt{q^w}\ttau_w$, we obtain 
formulas with respect to the orthonormal basis:
\begin{align}\label{al:initR}
\ttau_1(R-c\mu) & = \sqrt{q_s}(1+q_t)\ttau_s-\sqrt{q_t}(1+q_s)\ttau_t+(q_t-q_s-c\mu)\ttau_1
\end{align}
and (for $|ws|>|w|$)
\begin{align}\label{al:sR} 
\ttau_{ws}(R-c\mu) &=-\sqrt{q_t}(1+q_s)\ttau_{wst}+(q_sq_t-1-c\mu)\ttau_{ws}+\sqrt{q_s}(1+q_t)\ttau_w
\end{align}
and (for $|wt|>|w|$)
\begin{align} \label{al:tR} 
\ttau_{wt}(R-c\mu) &= \sqrt{q_s}(1+q_t)\ttau_{wts}-(q_sq_t-1+c\mu)\ttau_{wt}-\sqrt{q_t}(1+q_s)\ttau_w.
\end{align}

Now suppose $\bv$ is an eigenvector for $st$ with eigenvalue
$\lambda$ (hence an eigenvector for $R$ with eigenvalue
$c\mu$).  For each $w\in W$, let $\{x_w\}$ be the
coordinates of $\bv$ with respect to the orthonormal basis $\{\ttau_w\}$, i.e.,
$x_w=\<\bv,\ttau_w\>_{\bq}$.  We then have 
\[\bv=\sum_{w\in W}x_w\ttau_w,\]
and $\bv\in L^2_{\bq} W$ if and only if $\sum_w|x_w|^2<\infty$.  

Rewriting the equation $\bv(R-c\mu)=0$ in terms of the coordinates
$\{x_w\}$ using (\ref{al:initR}), (\ref{al:sR}), (\ref{al:tR}), we
obtain the equations 
\begin{align*}
(q_t-q_s-c\mu)x_1+\sqrt{q_s}(1+q_t)x_s-\sqrt{q_t}(1+q_s)x_t=0,
\end{align*}
and (for $|ws|>|w|$)
\begin{align*}
\sqrt{q_s}(1+q_t)x_w+(q_sq_t-1-c\mu)x_{ws}-\sqrt{q_t}(1+q_s)x_{wst}=0,
\end{align*}
and (for $|wt|>|w|$)
\begin{align*}
-\sqrt{q_t}(1+q_s)x_w-(q_sq_t-1+c\mu)x_{wt}+\sqrt{q_s}(1+q_t)x_{wts}=0.
\end{align*}
With the substitutions 
\begin{alignat}{3}
\nonumber \alpha_s & = \sqrt{q_s}+\frac{1}{\sqrt{q_s}} &&
\delta &=&  \frac{\alpha_s}{\alpha_t}\\
\label{al:substitutions}\alpha_t &= \sqrt{q_t}+\frac{1}{\sqrt{q_t}} &\hspace{.3in}\;\;\;\mbox{and}\;\;\;\hspace{.3in}&
\beta &=& \frac{\alpha_{st}}{\alpha_s}-\alpha_t\mu\\
\nonumber \alpha_{st} &= \sqrt{q_sq_t}-\frac{1}{\sqrt{q_sq_t}} &&
\gamma &=& \frac{\alpha_{st}}{\alpha_t}+\alpha_s\mu
\end{alignat}
these three equations simplify to  
\begin{align}\label{al:xinit}
\frac{x_s}{\alpha_s}-\frac{x_t}{\alpha_t}=\left(\mu-\frac{1}{1+q_s}+\frac{1}{1+q_t}\right)x_1,
\end{align}
and (for $|ws|>|w|$)
\begin{align}\label{al:xst}
x_{wst}=\delta^{-1}x_w+\beta x_{ws},
\end{align}
and (for $|wt|>|w|$)
\begin{align}\label{al:xts}
x_{wts}=\delta x_w+\gamma x_{wt}.
\end{align}
Applying these last two formulas consecutively to $x_{wsts}$ we have
\begin{align}\label{al:xsts}
x_{wsts}=\gamma\delta^{-1}x_w+(\delta+\beta\gamma)x_{ws},
\end{align}
and applying them to $x_{wtst}$, we have 
\begin{align}\label{al:xtst}
x_{wtst}=\beta\delta x_w+(\delta^{-1}+\beta\gamma)x_{wt}.
\end{align}
The equations (\ref{al:xst}) and (\ref{al:xsts}) give a second order linear
recurrence for the coefficients $x_1,x_s,x_{st},x_{sts},\ldots$ given
in matrix form by
\begin{align}\label{al:M}
\left[\begin{array}{c}
x_{(st)^{n+1}}\\
x_{(st)^{n+1}s}\end{array}\right] 
= & M
\left[\begin{array}{c}
x_{(st)^{n}}\\
x_{(st)^{n}s}\end{array}\right] \hspace{.3in}\mbox{where}\hspace{.3in}
M= \left[\begin{array}{cc}
\delta^{-1} & \beta\\
\gamma\delta^{-1} & \beta\gamma+\delta\end{array}\right] 
\end{align}
and the equations (\ref{al:xts}) and (\ref{al:xtst}) yield a recurrence
for the coefficients $x_1,x_t,x_{ts},x_{tst},\ldots$ given by 
\begin{align}\label{al:N}
\left[\begin{array}{c}
x_{(ts)^{n+1}}\\
x_{(ts)^{n+1}t}\end{array}\right] 
= & N \left[\begin{array}{c}
x_{(ts)^{n}}\\
x_{(ts)^{n}t}\end{array}\right] \hspace{.3in}\mbox{where}\hspace{.3in}
N= \left[\begin{array}{cc}
\delta & \gamma\\
\beta\delta & \beta\gamma+\delta^{-1}\end{array}\right] 
\end{align}
for $n=0,1,2,\ldots$.  We let $\bm$ and $\bn$ denote the initial
vectors  
\[ \bm=\left[\begin{array}{c}
x_1\\
x_s\end{array}\right] \;\;\mbox{and}\;\;
\bn=\left[\begin{array}{c}
x_1\\
x_t\end{array}\right]\]
of these recurrences.  They are constrained only by the single
equation (\ref{al:xinit})
\[\frac{x_s}{\alpha_s}-\frac{x_t}{\alpha_t}=\left(\mu-\frac{1}{1+q_s}+\frac{1}{1+q_t}\right)x_1.\]

Note that the matrices $M$ and $N$ from (\ref{al:M}) and (\ref{al:N})
have the same trace and determinant 
\[\tr M=\tr N=\beta\gamma+\delta+\delta^{-1}\;\;\mbox{and}\;\;
\det M=\det N=1,\]
hence they have the same eigenvalues.  Moreover, these eigenvalues are
multiplicative inverses of each other.  The basic fact we shall use to
eliminate most of the possible eigenvectors for $a_s-a_t$ is that
a nonzero solution $\bv=\sum_w x_w\ttau_w$ to the recurrence (\ref{al:M}) (and
similarly for (\ref{al:N})) must satisfy $M^n\bm\rightarrow 0$ as
$n\rightarrow\infty$.  Otherwise, the sum 
\[\sum_{n=0}^{\infty}\|M^n\bm\|^2=\sum_{n=0}^{\infty} (|x_{(st)^n}|^2+|x_{(st)^ns}|^2),\]
which is a lower bound for $\|\bv\|^2=\sum_{w}|x_w|^2$, will diverge.   

First we rule out the case where $M$ and $N$ do not have a basis of
eigenvectors.  In particular, $M$ and $N$ will only have one
eigenvalue in this case, and it will be equal to $+1$ or $-1$. 

\begin{lemma}\label{lem:equal-eigen}
If $M$ (and hence $N$) does not have linearly independent eigenvectors
and the initial vectors $\bm$ and $\bn$ are not both zero, then
$\sum_{w}|x_w|^2=\infty$.   
\end{lemma}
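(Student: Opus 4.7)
The plan is to apply the lower bound
\[\sum_{n=0}^{\infty}\|M^n \bm\|^2 \leq \sum_w |x_w|^2\]
recorded just above the lemma (and the analogous inequality for $N$ and $\bn$), and to show that whichever of $\bm$, $\bn$ is nonzero produces a divergent series on the left.

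First I would analyze the Jordan form. Since $\det M = 1$ and $M$ is not diagonalizable, $M$ has a single repeated eigenvalue $\lambda$ with $\lambda^2 = 1$, hence $\lambda = \pm 1$, and $M$ is conjugate to the Jordan block
\[J_\lambda = \begin{pmatrix}\lambda & 1\\ 0 & \lambda\end{pmatrix}.\]
A direct computation gives
\[J_\lambda^n \begin{pmatrix} a\\ b \end{pmatrix} = \begin{pmatrix} a\lambda^n + nb\lambda^{n-1}\\ b\lambda^n\end{pmatrix}.\]
For any nonzero $(a,b)^T \in \bbC^2$: if $b \neq 0$, the second coordinate has constant modulus $|b|$; if $b = 0 \neq a$, the first coordinate has constant modulus $|a|$. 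In either case $\|J_\lambda^n (a,b)^T\|^2$ is bounded below by a positive constant for all $n$, so $\sum_{n=0}^\infty \|J_\lambda^n (a,b)^T\|^2 = \infty$.

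Writing $M = P J_\lambda P^{-1}$, the norms $\|M^n \bm\|$ and $\|J_\lambda^n(P^{-1}\bm)\|$ differ by at most a bounded multiplicative factor, so the divergence persists for $M$ whenever $\bm \neq 0$. The same reasoning applies to $N$ and $\bn$. Since at least one of $\bm$, $\bn$ is nonzero by hypothesis, the sum $\sum_w |x_w|^2$ dominates one of these divergent series and is therefore infinite.

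The only mild subtlety will be the dichotomy between the two Jordan cases: when $\bm$ happens to lie in the one-dimensional $\lambda$-eigenspace of $M$, the sequence $\|M^n \bm\|$ is merely bounded away from zero rather than growing, while otherwise it grows linearly in $n$. Both alternatives fail to produce an $\ell^2$ sequence, so no separate case analysis is required beyond recording this observation; the key point is simply that $\det M = 1$ forces $|\lambda|=1$, which rules out any exponential decay that could save the sum.
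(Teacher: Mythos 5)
Your proof is correct and takes essentially the same route as the paper's: both reduce to the Jordan block for the single repeated eigenvalue $\chi=\pm 1$ (forced by $\det M=1$), observe that the powers applied to a nonzero initial vector stay bounded away from zero, and conclude via the lower bound $\sum_{n}\|M^n\bm\|^2\le\sum_w|x_w|^2$. The paper phrases the computation in terms of a generalized eigenbasis rather than explicit conjugation to the Jordan block, but the content is identical.
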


\begin{proof}{}
Without loss of generality, we can assume that $\bm$ is nonzero.  Let
$\chi\in\{1,-1\}$ be the eigenvalue for $M$.  Since the $\chi$- 
eigenspace for $M$ is $1$-dimensional, the Jordan form for
$M$ will be upper triangular with $\chi$ on the
diagonal and a $1$ in the upper corner.  It follows that there exists
a basis $\{\bm_1,\bm_2\}$ such that 
\[M^n\bm_1=\chi^n\bm_1,\;\;\mbox{and}\;\;
M^n\bm_2=\chi^n\bm_2+n\chi^{n-1}\bm_1.\] 
Writing $\bm=a\bm_1+b\bm_2$, we then have 
\[M^n\bm=(a\chi+bn)\chi^{n-1}\bm_1+b\chi^n\bm_2.\]
Since $a$ and $b$ are not both zero and $\chi=\pm 1$, the sequence
$M^n\bm$ does not converge to zero.
\end{proof}

Now assume $M$ and $N$ each have linearly independent eigenvectors
$\bm_1,\bm_2$ and $\bn_1,\bn_2$, respectively.  Since $M$ and $N$ have
the same eigenvalues, we can assume further that $\bm_i$ and $\bn_i$
correspond to the same eigenvalue, which we denote by $\chi_i$. Since
$\chi_1\chi_2=1$, we also assume $|\chi_1|\geq 1\geq|\chi_2|>0$.  
Our next step is to rule out the case where either of the initial
vectors has a nonzero component in the direction of the
$\chi_1$-eigenvector. 

\begin{lemma} \label{lem:bigger-eigen}
Assume the initial vectors $\bm$ and $\bn$ are
expressed as linear combinations of $\{\bm_1,\bm_2\}$ and
$\{\bn_1,\bn_2\}$, respectively.  If  $\bm$ has a nonzero component in
the direction of $\bm_1$ or  $\bn$ has a nonzero component in the
direction of $\bn_1$ then $\sum_{w}|x_w|^2=\infty$.   
\end{lemma}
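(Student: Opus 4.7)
The plan is to run the argument separately for $\bm$ and $\bn$: by symmetry between the recurrences (\ref{al:M}) and (\ref{al:N}), it suffices to show that a nonzero $\bm_1$-component of $\bm$ forces the partial sum $\sum_{n=0}^{\infty}\|M^n\bm\|^2=\sum_{n=0}^{\infty}(|x_{(st)^n}|^2+|x_{(st)^ns}|^2)$ to diverge. Since this quantity is a lower bound for $\sum_w|x_w|^2=\|\bv\|^2$, divergence of the partial sum proves the lemma.

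Write $\bm=a\bm_1+b\bm_2$ with $a\neq 0$. Applying the recurrence,
\[M^n\bm = a\chi_1^n\bm_1 + b\chi_2^n\bm_2.\]
The key observation is that since $\bm_1,\bm_2$ are linearly independent in $\bbC^2$, the map $(c_1,c_2)\mapsto\|c_1\bm_1+c_2\bm_2\|$ is a norm on $\bbC^2$, and on the finite-dimensional space $\bbC^2$ any two norms are equivalent. In particular, there is a constant $C>0$ such that
\[\|c_1\bm_1+c_2\bm_2\|\geq C\max(|c_1|,|c_2|)\]
for all $c_1,c_2\in\bbC$. Applying this to $c_1=a\chi_1^n$, $c_2=b\chi_2^n$ and using $|\chi_1|\geq 1$, I would conclude
\[\|M^n\bm\|\geq C|a|\,|\chi_1|^n\geq C|a|>0\]
for every $n\geq 0$, so the terms of $\sum_n\|M^n\bm\|^2$ are bounded below by the positive constant $C^2|a|^2$ and the series diverges.

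The symmetric statement for $\bn$ and the recurrence (\ref{al:N}) follows by the same argument applied to $\bn_1,\bn_2$, yielding divergence of $\sum_n(|x_{(ts)^n}|^2+|x_{(ts)^nt}|^2)$, which is again a lower bound for $\sum_w|x_w|^2$. There is no real obstacle here beyond being careful in the borderline case $|\chi_1|=|\chi_2|=1$, where one cannot argue by growth of a single term; the equivalence-of-norms trick is precisely what handles that case, since linear independence of $\bm_1,\bm_2$ rules out destructive cancellation between the two eigencomponents.
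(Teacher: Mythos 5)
Your proposal is correct and follows essentially the same route as the paper: expand $M^n\bm=a\chi_1^n\bm_1+b\chi_2^n\bm_2$, use $|\chi_1|\geq 1$ to see these vectors stay bounded away from zero, and conclude that the lower bound $\sum_n\|M^n\bm\|^2$ for $\|\bv\|^2$ diverges. The only difference is that you make explicit (via equivalence of norms on $\bbC^2$) the step the paper leaves implicit, namely that linear independence of $\bm_1,\bm_2$ prevents cancellation; this is a reasonable bit of added care but not a different argument.
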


\begin{proof}{}
Suppose  $\bm=a\bm_1+b\bm_2$ with $a\neq 0$.  Then 
\[M^n\bm=a(\chi_1)^n\bm_1+b(\chi_2)^n\bm_2.\]
Since $|\chi_1|\geq 1$, these vectors do not converge to zero.
The $\bn$ case is similar.
\end{proof}

In light of Lemmas~\ref{lem:equal-eigen} and \ref{lem:bigger-eigen},
we may assume that if $\bv=\sum_{w}x_w\ttau_w$ is an eigenvector of
$a_s-a_t$ with eigenvalue $\mu$, then 
\begin{itemize}
\item $M$ (and also $N$) has distinct eigenvalues $\chi_1$ and $\chi_2$ with
  $|\chi_1|>1>|\chi_2|$, and 
\item $\bm$ (respectively, $\bn$) is a $\chi_2$-eigenvector of $M$
  (resp., $N$).
\end{itemize}

We consider the following two cases.  
\begin{enumerate}
\item[]{\bf Case 1.} Either $\beta=0$ and $\chi_2=\delta^{-1}$ or $\gamma=0$
  and $\chi_2=\delta$.
\item[]{\bf Case 2.} The vectors 
\[
\bm'=\left[\begin{array}{c} \beta\\
    \chi_2-\delta^{-1}\end{array}\right]\;\;\mbox{and}\;\; 
\bn'= \left[\begin{array}{c} \gamma\\
    \chi_2-\delta\end{array}\right]
\]
are both nonzero.
\end{enumerate}

We first rule out Case 1.  Suppose $\beta=0$ and
$\chi_2=\delta^{-1}$.  Since $\beta=0$, the matrices $M$ and $N$
simplify to  
\[M=\left[\begin{array}{cc}\delta^{-1} & 0\\\gamma\delta^{-1}
    &\delta\end{array}\right]\;\;\mbox{and}\;\;
    N=\left[\begin{array}{cc}\delta & \gamma\\ 0
    &\delta^{-1}\end{array}\right],\]
and 
\[\mu=\frac{\alpha_{st}}{\alpha_s\alpha_t}=\frac{q_sq_t-1}{(q_s+1)(q_t+1)}.\]
Since $\chi_2=\delta^{-1}$, a calculation then shows that the
    $\chi_2$-eigenvectors of $M$ and $N$ are   
\[\left[\begin{array}{c} q_s-q_t\\
    -2\sqrt{q_s}(1+q_t)\end{array}\right]\;\;\mbox{and}\;\;
    \left[\begin{array}{c} -2\sqrt{q_t}(1+q_s)\\ 
    q_s-q_t\end{array}\right],\] 
respectively. Since $q_s$ and $q_t$ are positive reals, the first
    coordinates of these vectors cannot both be zero.  On the other
    hand, since these vectors are nonzero multiples of $\bm$ and $\bn$
    (which both have first coordinate equal to $x_1$), neither of
    these two vectors can have vanishing first coordinate.  It follows
    that $x_1\neq 0$, so we can scale $\nu$ so that $x_1=1$.  Then 
    $\bm=\left[\begin{array}{c}1\\x_s\end{array}\right]$ and 
    $\bn=\left[\begin{array}{c}1\\x_t\end{array}\right]$.  Since these
    are multiples of the $\chi_2$-eigenvectors above, we have 
\[x_s=-\frac{2\sqrt{q_s}(q_t+1)}{q_s-q_t},\]
and 
\[x_t=-\frac{q_s-q_t}{2\sqrt{q_t}(1+q_s)}.\]
Substituting these values into the initial equation (\ref{al:xinit}),
and isolating the numerator, we obtain 
\[(q_s+q_t+2)(2q_sq_t+q_s+q_t)=0\]
which has no solutions for positive $q_s$ and $q_t$.  A similar
analysis yields a contradiction in the case $\gamma=0$ and
$\chi_2=\delta$. 

For Case 2, the vectors $\bm'$ and $\bn'$ are nonzero.  A calculation
shows that they are $\chi_2$-eigenvectors for $M$ and $N$,
respectively, hence are nonzero multiples of $\bm$ and $\bn$.  We can
assume that $\beta$ and $\gamma$ are not both zero.  (Otherwise, both
$M$ and $N$ would be diagonal with entries $\delta$ and $\delta^{-1}$,
which means $\chi_2$ would have to be one of these, putting us back
into Case 1.)  Moreover, since $\bm'$ and $\bn'$ are nonzero
multiples of the vectors $\bm$ and $\bn$, respectively, and the latter
both have the same first coordinate $x_1$, we know that neither
$\beta$ nor $\gamma$ can be zero.  Again, by scaling $\nu$ if
necessary to get $x_1=1$, we then have    
\begin{align}\label{al:xs}
x_s=\frac{\chi_2-\delta^{-1}}{\beta}
\end{align}
and since  $\displaystyle \bn=\left[\begin{array}{c} 
x_1\\ x_t\end{array}\right]$ is a multiple of $\bn_2$, we have 
\begin{align}\label{al:xt}
x_t=\frac{\chi_2-\delta}{\gamma}.
\end{align}
Substituting these values into the initial equation (\ref{al:xinit})
we obtain 
\[\frac{\chi_2-\delta^{-1}}{\beta\alpha_s}-\frac{\chi_2-\delta}{\gamma\alpha_t}=\mu-\frac{1}{1+q_s}+\frac{1}{1+q_t}.\]
On the other hand, $\chi_2$ must also satisfy the characteristic
equation for $M$ and $N$, which is 
\[\chi_2^2-(\beta\gamma+\delta+\delta^{-1})\chi_2+1=0\]
Rewriting these equations in terms of $q_s$ and $q_t$, and solving
simultaneously for $\chi_2$ and $\mu$, we obtain the solutions
\begin{itemize}
\item $\chi_2=\sqrt{q_sq_t}$ and $\mu=0$,
\item $\chi_2=1/\sqrt{q_sq_t}$ and $\mu=0$,
\item $\chi_2=-\sqrt{q_s}/\sqrt{q_t}$ and $\mu=1$, or 
\item $\chi_2=-\sqrt{q_t}/\sqrt{q_s}$ and $\mu=-1$.
\end{itemize}
It follows that the only possible eigenvalues for $a_s-a_t$ are
$\mu=0$ and $\mu=\pm 1$, and hence (by Lemma~\ref{lem:lambda2mu}), the
only possible eigenvalues for $st$ are $\lambda=+1$ (if $\mu=0$) and
$\lambda=-1$ (if $\mu=\pm 1$).  

To describe the corresponding eigenvectors in a concise way, we define for any
real parameters $r_s,r_t$ the vector $\bk(r_s,r_t)$ as follows.  For
each $w\in W$, we define the coefficient $r^w$ as we did $q^w$.  For
the dihedral group, this looks like 
\begin{align}
\label{al:rw}
r^w=\left\{\begin{array}{ll}
r_s^nr_t^n & \mbox{if $w=(st)^n$ or $w=(ts)^n$}\\
r_s^{n+1}r_t^n & \mbox{if $w=(st)^ns$}\\
r_s^nr_t^{n+1} & \mbox{if $w=t(st)^n$}\end{array}\right.
\end{align}
for all $n\geq 0$.  We then define $\bk(r_s,r_t)$ by 
\[\bk(r_s,r_t)=\sum_{w}r^w\tau_w.\]
The $L^2$-norm of $\bk(r_s,r_t)$ is given by the geometric series 
\begin{align*}
\nonumber \|\bk(r_s,r_t)\|^2 &=\sum_w (r^w)^2q^w\\
\nonumber
&=1+r_s^2q_s+r_t^2q_t+\sum_{n=1}^{\infty}(2+r_s^2q_s+r_t^2q_t)(r_sr_t)^{2n}(q_sq_t)^n.
\end{align*}
This series converges if and only if 
\[(r_sr_t)^2<\frac{1}{q_sq_t},\]
and in this case converges to 
\begin{align} \label{al:knorm}
\|\bk(r_s,r_t)\|^2 &= \frac{(1+r_s^2q_s)(1+r_t^2q_t)}{1-r_s^2r_t^2q_sq_t}.
\end{align}
Putting all of this together, we obtain the following theorem. 

\begin{theorem}\label{thm:W-eigen}
If $\lambda$ is an eigenvalue for right or left multiplication
by $st$ on $\LW$, then $\lambda\in\{-1,+1\}$ and the corresponding
eigenspace is spanned by a single vector.  The eigenvalue/eigenvector
pairs occur as follows:
\begin{enumerate}
\item If $q_sq_t<1$, then $\lambda =1$ occurs with eigenvector $\bk(1,1)$,
\item If $q_sq_t>1$, then $\lambda =1$ occurs with eigenvector
  $\bk(-1/q_s,-1/q_t)$, 
\item If $q_s<q_t$, then $\lambda =-1$ occurs with eigenvector  
  $\bk(1,-1/q_t)$, 
\item If $q_s>q_t$, then $\lambda =-1$ occurs with eigenvector  
  $\bk(-1/q_s,1)$.
\end{enumerate}
\end{theorem}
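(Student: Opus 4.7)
The plan is to harvest the long reduction already carried out above into a short proof, and then exhibit the eigenvectors explicitly as members of the $\bk(r_s,r_t)$ family just introduced.

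First I would invoke Lemma~\ref{lem:lambda2mu} to trade the eigenvalue problem for right multiplication by $st$ for the eigenvalue problem for $a_s-a_t$: each eigenvalue $\lambda$ of $st$ on $\LW$ has $|\lambda|=1$ and corresponds to an eigenvalue $\mu=\pm\sqrt{(1-\Re\lambda)/2}$ of $a_s-a_t$ on the same eigenvector. Writing such a vector as $\bv=\sum_w x_w\ttau_w$, the relations (\ref{al:xinit}), (\ref{al:xst}), (\ref{al:xts}) amount to two coupled second-order linear recurrences driven by the matrices $M$ and $N$ of (\ref{al:M}) and (\ref{al:N}). Square-summability of $\{x_w\}$ combined with Lemmas~\ref{lem:equal-eigen} and~\ref{lem:bigger-eigen} forces $M$ and $N$ to share distinct eigenvalues $\chi_1,\chi_2$ with $|\chi_1|>1>|\chi_2|>0$, and forces the initial vectors $\bm,\bn$ into the corresponding $\chi_2$-eigenspaces. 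The elimination of Case~1 and the simultaneous solution of the characteristic and initial equations in Case~2 already carried out then leave only the four admissible pairs $(\chi_2,\mu)$, giving $\mu\in\{0,1,-1\}$ and hence $\lambda\in\{+1,-1\}$.

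Next I would show that each admissible pair is actually realized by an eigenvector of the form $\bk(r_s,r_t)$. For any parameters $(r_s,r_t)$, the orthonormal coordinates of $\bk(r_s,r_t)$ are $x_w=r^w\sqrt{q^w}$, so along the sequence $1,(st),(st)^2,\ldots$ they form a geometric progression with ratio $r_sr_t\sqrt{q_sq_t}$. Matching this ratio to $\chi_2$ forces $r_sr_t=\chi_2/\sqrt{q_sq_t}$; substituting back into the recurrences (\ref{al:xst}) and (\ref{al:xts}) pins down the individual signs uniquely, yielding the parameter pairs $(1,1)$, $(-1/q_s,-1/q_t)$, $(1,-1/q_t)$, $(-1/q_s,1)$ in the four cases. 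Formula (\ref{al:knorm}) gives a finite $L^2$-norm for $\bk(r_s,r_t)$ exactly when $(r_sr_t)^2q_sq_t<1$; substituting each of the four parameter pairs into this inequality recovers exactly the four conditions $q_sq_t<1$, $q_sq_t>1$, $q_s<q_t$, $q_s>q_t$ stated in the theorem.

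Finally, one-dimensionality of each eigenspace is automatic from the reduction: $\bm$ lives in the one-dimensional $\chi_2$-eigenspace of $M$, so it is determined up to a single scalar $x_1$; the initial equation (\ref{al:xinit}) then fixes $x_t$ in terms of $x_1$ and $x_s$, and the recurrences determine every remaining $x_w$. The statement for left multiplication by $st$ follows by running the whole argument with the left-multiplication formulas for $\tau_s,\tau_t$ in place of the right-multiplication formulas; the two sets of formulas are completely symmetric, so the same four pairs $(\chi_2,\mu)$ and the same four vectors emerge. The main obstacle, and the only place where real care is needed, is the sign bookkeeping that ties each admissible $\chi_2$ to the correct choice of $(r_s,r_t)$; this is a direct substitution into (\ref{al:xst})--(\ref{al:xts}) but requires one to track the square roots and signs introduced by the $\alpha_s,\alpha_t,\alpha_{st},\beta,\gamma$ substitutions.
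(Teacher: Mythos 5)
Your proposal is correct and follows essentially the same route as the paper: it harvests the preceding reduction (Lemma~\ref{lem:lambda2mu}, the recurrences, Lemmas~\ref{lem:equal-eigen} and~\ref{lem:bigger-eigen}, and the Case~1/Case~2 elimination) to get $\lambda\in\{\pm1\}$, then matches the geometric solutions $x_w=r^wq^{w/2}$ to the admissible $\chi_2$ and uses the convergence condition $(r_sr_t)^2q_sq_t<1$ to sort the four cases, exactly as the paper does. The only cosmetic difference is at the very end: for left multiplication you rerun the symmetric argument, whereas the paper passes to adjoints and uses the self-adjointness of $\bk(r_s,r_t)$; both are valid.
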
 

\begin{proof}{}
For right multiplication by $st$, the only thing left to prove is that
the indicated eigenvectors are the solutions to the recurrences
(\ref{al:M}) and (\ref{al:N}) for the 
given values of $\lambda$ and $\bq$.  Using the initial vectors
$\displaystyle \bm=\left[\begin{array}{c}  
1\\ x_s\end{array}\right]$, $\displaystyle \bn=\left[\begin{array}{c} 
1\\ x_t\end{array}\right]$ to get
\begin{align*}
x_{(st)^n} &=(\chi_2)^n,\\
x_{(ts)^n}& =(\chi_2)^n,\\
x_{(st)^ns}&=(\chi_2)^n x_s,\\
x_{(ts)^nt}&=(\chi_2)^n x_t,\\
\end{align*}
with $x_s$ and $x_t$ given by (\ref{al:xs}) and (\ref{al:xt}).  If,
for example, $\lambda=1$ and $q_sq_t<1$, then $\mu=0$ and
$\chi_2=\sqrt{q_sq_t}$.  It follows that $x_1=1$, $x_s=\sqrt{q_s}$,
$x_t=\sqrt{q_t}$, and in general $x_w=\sqrt{q^w}$.  Hence 
\[\bv=\sum_w \sqrt{q^w}\ttau_w=\sum_w \tau_w=\bk(1,1),\]
which is in $\LW$.  The cases (2)-(4) are similar.

For left multiplication, one notes that $\nu$ is a
$\lambda$-eigenvector for right multiplication by $(st)$ if and only
if $\nu^*$ is a $\bar{\lambda}$-eigenvector for left multiplication by
$(ts)=(st)^*$.  But since $|\lambda|=1$, this is true if and only if
$\nu^*$ is a $\lambda$-eigenvector for left multiplication by 
$st=(ts)^{-1}$.  The result then follows from the fact that for real
values of $r_s$ and $r_t$, $\kappa(r_s,r_t)$ is self-adjoint. 
\end{proof}

\section{Decompositions of $\NG$ and $\NW$-modules}

In this section we use the eigenspaces for the $st$-action to obtain
orthogonal decompositions of $\LG$ and $\LW$.  We then use these
decompositions to decompose any $\NW$-module in order to relate
its von Neumann dimension as an $\NG$-module to its dimension as an
$\NW$-module.  

First we describe key properties of the eigenvectors in
Theorem~\ref{thm:W-eigen}.  For a given $\bq$, we let $\bk_+$ denote
the vector  
\[\bk_+=\left\{\begin{array}{ll}
\bk(1,1) & \mbox{if $q_sq_t<1$}\\
\bk(-1/q_s,-1/q_t) & \mbox{if $q_sq_t>1$}\\
0 & \mbox{if $q_sq_t=1$}\end{array}\right.\]
and we let $\bk_-$ denote the vector 
\[\bk_-=\left\{\begin{array}{ll}
\bk(1,-1/q_t) & \mbox{if $q_s<q_t$}\\
\bk(-1/q_s,1) & \mbox{if $q_s>q_t$}\\
0 & \mbox{if $q_s=q_t$}\end{array}\right..\]

\begin{remark}
Many of the results of this section follow from results of Davis et al.
\cite{DDJO}.  In particular, for $q_sq_t<1$ the span of $\bk_+$ is
the invariant subspace of $\LW$ consisting of constants, which is denoted
by $A^{\{s,t\}}$ in \cite{DDJO}.  Projection onto this subspace is the
averaging operator denoted by $a_{\{s,t\}}$ in \cite{DDJO} and by
$\tk_+$, below.   The vectors $\bk_{\pm}$ for other values of $\bq$
can all be obtained from $\bk_+$ by applying the ``partial
$j$-automorphisms'' of $\LW$ described in \cite[Section~9]{OS}.  For
completeness, we present proofs here without using these more
general results.
\end{remark}

\begin{proposition}\label{prop:Wfixesks}
Any element $w\in W$ fixes the vectors $\bk_+$ and $\bk_-$ (up
to sign).  More precisely, we have:
\begin{enumerate}
\item $s\bk_+=\bk_+s=\bk_+$ and
  $t\bk_+=\bk_+t=\bk_+$ if $q_sq_t<1$, 
\item $s\bk_+=\bk_+s=-\bk_+$ and
  $t\bk_+=\bk_+t=-\bk_+$ if $q_sq_t>1$, 
\item $s\bk_-=\bk_-s=\bk_-$ and
  $t\bk_-=\bk_-t=-\bk_-$ if $q_s<q_t$, and 
\item $s\bk_-=\bk_-s=-\bk_-$ and
  $t\bk_-=\bk_-t=\bk_-$ if $q_s>q_t$.
\end{enumerate}
\end{proposition}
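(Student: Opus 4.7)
The strategy is direct computation in the $\tau_w$-basis of $\bbR W$. I first evaluate $\bk(r_s,r_t)\tau_s$ (and by symmetry $\bk(r_s,r_t)\tau_t$) as a scalar multiple of $\bk(r_s,r_t)$ for the special parameters appearing in the definitions of $\bk_\pm$. Once this is done, the formula (\ref{al:s-to-taus}) converts the answer into the $s$-action (respectively the $t$-action), and self-adjointness of $s$ and $t$ from Proposition~\ref{prop:gens-adjoint} will then deliver the left-action statements.

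For the first step, I partition $W$ into the disjoint pairs $(w,ws)$ with $|ws|>|w|$: the pair $\{1,s\}$, the pairs $\{(st)^n,(st)^ns\}$ for $n\ge 1$, and the pairs $\{t(st)^n,(ts)^{n+1}\}$ for $n\ge 0$. On each such pair the right multiplication rule for $\tau_s$ gives
\[
(r^w\tau_w+r^{ws}\tau_{ws})\tau_s = q_sr^{ws}\tau_w + \bigl(r^w+(q_s-1)r^{ws}\bigr)\tau_{ws}.
\]
A direct inspection of (\ref{al:rw}) shows that $r^{ws}/r^w = r_s$ in every one of these pairs. A short computation then shows that the right-hand side equals $(q_sr_s)(r^w\tau_w+r^{ws}\tau_{ws})$ if and only if
\[
(r_s-1)(q_sr_s+1)=0,
\]
that is, $r_s\in\{1,-1/q_s\}$. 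Summing over all pairs yields $\bk(r_s,r_t)\tau_s=q_sr_s\,\bk(r_s,r_t)$ for any such $r_s$ and any $r_t$. Translating via (\ref{al:s-to-taus}), the scalar $q_sr_s$ becomes $\frac{1-q_s}{1+q_s}+\frac{2q_sr_s}{1+q_s}$, which equals $+1$ when $r_s=1$ and $-1$ when $r_s=-1/q_s$. Matching against the definitions of $\bk_\pm$ produces precisely the right-multiplication $s$-formulas in parts (1)--(4), and the identical argument with $t$ in place of $s$ (using pairs $(w,wt)$ with $|wt|>|w|$ and the ratio $r^{wt}/r^w=r_t$) produces the $t$-formulas.

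For the left action, an inspection of (\ref{al:rw}) gives $r^w=r^{w^{-1}}$: the inversion map exchanges $(st)^n$ and $(ts)^n$ (both with coefficient $r_s^nr_t^n$) and fixes the remaining basis elements $(st)^ns$ and $t(st)^n$. Hence $\bk(r_s,r_t)$ is self-adjoint for the Hilbert-algebra involution $\tau_w^*=\tau_{w^{-1}}$. Combined with $s^*=s$ and the anti-multiplicativity of $*$, the relation $\bk_\pm s=\varepsilon\bk_\pm$ (with $\varepsilon=\pm 1$ real) yields $s\bk_\pm=(\bk_\pm s)^*=\varepsilon\bk_\pm$, and the same argument gives the left-$t$ formulas. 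No step presents a serious obstacle; the only care required is to verify the ratio $r^{ws}/r^w=r_s$ uniformly across all three families of pairs, which is exactly what makes the single scalar equation $(r_s-1)(q_sr_s+1)=0$ both necessary and sufficient for the eigenvalue relation.
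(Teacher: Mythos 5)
Your proof is correct and proceeds by essentially the same route as the paper: a direct computation with the Hecke right-multiplication rules, organizing the terms of $\bk(r_s,r_t)$ into $s$-related (resp.\ $t$-related) pairs, which is just an unpacked form of the paper's factorization of $(1+\tau_s)$ or $(q_s-\tau_s)$ out of $\bk_{\pm}$. The only organizational difference is that you establish the right action first and obtain the left action from self-adjointness of $\bk_{\pm}$ under $\tau_w\mapsto\tau_{w^{-1}}$ (the same device the paper uses later in the proof of Theorem~\ref{thm:W-eigen}), whereas the paper factors on the left and right separately; both the pairing identity $r^{ws}/r^w=r_s$ and the scalar equation $(r_s-1)(q_sr_s+1)=0$ check out.
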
 

\begin{proof}{}
These are all calculations using Hecke multiplication.  The two basic
identities one needs are $sa_s=a_s$ and $sh_s=-h_s$.  These follows
from the definitions of $a_s$ and $h_s$ in (\ref{al:as}) and
(\ref{al:hs}) in terms of the group algebra basis:
\[sa_s=\frac{s(1+s)}{2}=\frac{s+s^2}{2}=\frac{s+1}{2}=a_s,\]
and 
\[sh_s=\frac{s(1-s)}{2}=\frac{s-s^2}{2}=\frac{s-1}{2}=-h_s.\]
Rewriting these identities using the expressions for $a_s$ and $h_s$
using the Hecke algebra basis in (\ref{al:as}) and
(\ref{al:hs}), and multiplying both sides by $1+q_s$, we
obtain the identities 
\begin{align}
s(1+\tau_s)=1+\tau_s \;\;\mbox{and}\;\;
s(q_s-\tau_s)=-(q_s-\tau_s).
\end{align}
Now to get, for example, the identity $s\bk_+=\bk_+$ when $q_sq_t<1$,
we have 
\begin{align*}
s\bk_+ & = s\bk(1,1)\\
 &= s(1+\tau_s+\tau_t+\tau_{st}+\tau_{ts}+\tau_{sts}+\cdots)\\
 &= s(1+\tau_s)(1+\tau_t+\tau_{ts}+\cdots)\\
 &= (1+\tau_s)(1+\tau_t+\tau_{ts}+\cdots)\\
 &= \bk_+.
\end{align*}
To get the identity $s\bk_-=-\bk_-$ when $q_s>q_t$, we have 
\begin{align*}
s\bk_- & = s\bk(-1/q_s,1)\\
 &= s(1-\tau_s/q_s+\tau_t-\tau_{st}/q_s-\tau_{ts}/q_s+\tau_{sts}/q_s^2-\cdots)\\
 &= s(q_s-\tau_s)(1/q_s+\tau_t/q_s-\tau_{ts}/q_s^2-\cdots)\\
 &= -(q_s-\tau_s)(1/q_s+\tau_t/q_s+\tau_{ts}/q_s^2-\cdots)\\
 &= - \bk_-.
\end{align*}
The remaining identities are obtained in a similar fashion by
factoring $(1+\tau_s)$, $(1+\tau_t)$, $(q_s-\tau_s)$, or
$(q_t-\tau_t)$ out of $\bk_{\pm}$ on the right or left depending on
the case.  We leave the details to the reader.
\end{proof}

Solving for $\tau_s$ in (\ref{al:as}) we get the formulas 
\[\tau_s=\frac{q_s-1}{2}+\frac{q_s+1}{2}s\;\;\mbox{and}\;\;
\tau_t=\frac{q_t-1}{2}+\frac{q_t+1}{2}t.\]
Using Proposition~\ref{prop:Wfixesks}, we then obtain additional
formulas for products $\bk_{\pm}$ with the Hecke generators  
$\tau_s$ and $\tau_t$:
\begin{align}
\nonumber\tau_s\bk_+=\bk_+\tau_s=q_s\bk_+\;\;\mbox{and}\;\;
  \tau_t\bk_+=\bk_+\tau_t=q_t\bk_+ &\;\;  \mbox{if}\;\;q_sq_t<1,\\
\label{al:k-times-tau}\tau_s\bk_+=\bk_+\tau_s=-\bk_+\;\;\mbox{and}\;\;
  \tau_t\bk_+=\bk_+\tau_t=-\bk_+ &\;\;  \mbox{if}\;\;q_sq_t>1,\\
\nonumber\tau_s\bk_-=\bk_-\tau_s=q_s\bk_-\;\;\mbox{and}\;\;
  \tau_t\bk_-=\bk_-\tau_t=-\bk_- &\;\;  \mbox{if}\;\;q_s<q_t,\\
\nonumber\tau_s\bk_-=\bk_-\tau_s=-\bk_-\;\;\mbox{and}\;\;
  \tau_t\bk_-=\bk_-\tau_t=q_t\bk_- &\;\;  \mbox{if}\;\;q_s>q_t.
\end{align}
These are useful because they allow us to show that the vectors
$\bk_{\pm}$ extend to well-defined operators in $\NW$.

\begin{proposition}\label{prop:ksbounded}
The elements $\bk_+$ and $\bk_-$ acting on $\bbR W$ extend to bounded
operators in $\NW$ (and $\NG$). 
\end{proposition}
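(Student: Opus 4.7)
The plan is to exploit the identities in (\ref{al:k-times-tau}), which say that $\bk_{\pm}$ is a common right-eigenvector for both Hecke generators $\tau_s$ and $\tau_t$. Denote the corresponding eigenvalues by $\lambda_s,\lambda_t\in\bbR$, read off from the relevant case of (\ref{al:k-times-tau}). Since every element of the infinite dihedral group has a unique reduced expression $w=s_{i_1}\cdots s_{i_n}$ and $\tau_w=\tau_{s_{i_1}}\cdots\tau_{s_{i_n}}$, setting $\lambda_w:=\lambda_{s_{i_1}}\cdots\lambda_{s_{i_n}}$ gives a well-defined scalar, and induction on length yields
\[\bk_{\pm}\tau_w=\lambda_w\bk_{\pm}\]
for every $w\in W$. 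The degenerate cases $\bk_+=0$ (when $q_sq_t=1$) and $\bk_-=0$ (when $q_s=q_t$) produce the zero operator and need no further argument.

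Next I would use this to observe that left multiplication by $\bk_{\pm}$ is a rank-one operator on $\bbR W$: for any finite sum $y=\sum_w c_w\tau_w$,
\[\bk_{\pm}y=\Lambda(y)\,\bk_{\pm},\qquad\Lambda(y):=\sum_w c_w\lambda_w,\]
so $\|\bk_{\pm}y\|_{\bq}=|\Lambda(y)|\,\|\bk_{\pm}\|_{\bq}$ and boundedness of the operator reduces to boundedness of the linear functional $\Lambda$ on $\LW$. Applying Cauchy--Schwarz to the sequences $(c_w\sqrt{q^w})$ and $(\lambda_w/\sqrt{q^w})$ yields
\[|\Lambda(y)|^2\leq\|y\|_{\bq}^2\cdot\sum_{w\in W}\frac{\lambda_w^2}{q^w},\]
so the whole question reduces to showing that the series $S:=\sum_{w}\lambda_w^2/q^w$ converges.

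The final step is a direct computation of $S$ in each of the four cases of Theorem~\ref{thm:W-eigen}, organized by the length of $w$. For $\bk_+$ with $q_sq_t<1$ one has $(\lambda_s,\lambda_t)=(q_s,q_t)$, so $\lambda_w^2/q^w=q^w$ and $S$ is a geometric series with ratio $q_sq_t<1$; for $\bk_+$ with $q_sq_t>1$ one has $\lambda_s=\lambda_t=-1$, so $\lambda_w^2/q^w=1/q^w$ and the ratio is $1/(q_sq_t)<1$; for $\bk_-$ with $q_s<q_t$ one has $(\lambda_s,\lambda_t)=(q_s,-1)$, and the contribution on each length stratum is of order $(q_s/q_t)^n$; the case $q_s>q_t$ is symmetric. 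Each series converges precisely because of the strict inequality defining its case, in direct parallel with the norm computation (\ref{al:knorm}). The parenthetical $\NG$ assertion follows from the same estimate since $\bbR G\subset\bbR W$. The ``main obstacle'' is really no obstacle at all: once one spots the eigenvector relation in (\ref{al:k-times-tau}), the rest is geometric-series bookkeeping essentially identical to the convergence analysis already carried out for $\|\bk_{\pm}\|^2$.
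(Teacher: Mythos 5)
Your proof is correct and is essentially the paper's own argument in light disguise: both rest on the relation $\bk_{\pm}\tau_w=q^wr^w\bk_{\pm}$ (your $\lambda_w$ equals $q^wr^w$), followed by the same Cauchy--Schwarz estimate, and your series $S=\sum_w\lambda_w^2/q^w$ is precisely $\|\bk_{\pm}\|_{\bq}^2$, whose convergence was already established in (\ref{al:knorm}). The only cosmetic difference is that you re-verify convergence case by case as a geometric series rather than simply identifying $S$ with the already-computed norm.
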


\begin{proof}{}
Let $\bk$ be either $\bk_+$ or $\bk_-$.   Since $\bk$ commutes with
all elements in $\bbR W$, it suffices to show that for any $y\in\bbR
W$, we have $\|\bk y\|_{\bq}\leq C \|y\|_{\bq}$ for some constant
$C$.  In fact, we'll show that $C=\|\bk\|_{\bq}^2$ works.  By
definition, $\bk$ is one of the four vectors $\bk(r_s,r_t)$ where
$(r_s,r_t)$ is one of the pairs $(1,1),(-1/q_s,-1/q_t),
(1,-1/q_t),(-1/q_s,1)$; hence,  
\[\bk=\sum r^w\tau_w\]
with $r^w$ given by (\ref{al:rw}).  Expressing $\tau_w$ as a product
of $\tau_s$'s and $\tau_t$'s, and using the product formulas
(\ref{al:k-times-tau}), one can verify that   
\begin{align}\label{al:tau-w-k}
\bk\tau_w=q^w r^w\bk.
\end{align}
Letting $y=\sum_w y_w\tau_w$, we then have 
\[\bk y=\sum_w y_w\bk\tau_w=\sum_w y_w q^w r^w
\bk=\sum_w(y_w\sqrt{q^w})(r^w\sqrt{q^w})\bk.\]
Taking square norms, we have
\begin{align*}
\|\bk y\|_{\bq}^2 & =|\sum_w(y_w\sqrt{q^w})(r^w\sqrt{q^w})|^2\|\bk\|_{\bq}^2\\
&\leq \sum_w|y_w\sqrt{q^w}|^2\sum_w|r^w\sqrt{q^w}|^2\|\bk\|_{\bq}^2\\
&= (\sum_w|y_w|^2q^w)(\sum_w|r^w|^2q^w)\|\bk\|_{\bq}^2\\
&= \|y\|_{\bq}^2\|\bk\|_{\bq}^2\|\bk\|_{\bq}^2,
\end{align*}
and taking square roots gives $\|\bk y\|_{\bq}\leq \|\bk\|_{\bq}^2\|y\|_{\bq}$.
\end{proof}

Let $K_+$ and $K_-$ denote the $+1$ and $-1$-eigenspaces
(respectively) for the right $st$-action on $\LW$.  In light of
Theorem~\ref{thm:W-eigen}, $K_+$ (respectively, $K_-$) is spanned by
the single vector $\bk_+$ (resp., $\bk_-$).    

\begin{proposition}\label{prop:G-decomp}
The subspace $\LG\subseteq \LW$ is $st$-invariant (on both sides) 
and contains both $K_+$ and $K_-$.  In fact, we have an orthogonal
decomposition of $\NG$-modules given by 
\[\LG=K_+\oplus K_-\oplus K_{\emptyset}\]
where $K_{\emptyset}$ is the orthogonal complement of $K_+\oplus K_-$ in
$\LG$.
\end{proposition}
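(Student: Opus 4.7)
The plan is to reduce Proposition~\ref{prop:G-decomp} to the single substantive claim that the eigenvectors $\bk_+,\bk_-\in\LW$ from Theorem~\ref{thm:W-eigen} actually lie in the subspace $\LG$; once that is established, the rest follows formally. First I will dispatch the $st$-invariance of $\LG$: since $st\in\bbR G$ and both left and right multiplication by $st$ are bounded operators on $\LW$, each preserves the closure $\LG$ of $\bbR G$.

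For the inclusion $K_\pm\subseteq\LG$, the key trick will be an orthogonal projection argument. Let $P_G\colon\LW\to\LG$ denote orthogonal projection. Because $(st)^*=ts=(st)^{-1}$, right multiplication by $st$ is a unitary operator on $\LW$; since it preserves $\LG$, it also preserves $\LG^\perp$. Therefore $P_G$ commutes with right multiplication by $st$, so $P_G\bk_\pm$ is again a $\pm1$-eigenvector. By Theorem~\ref{thm:W-eigen} the $\pm1$-eigenspaces are at most one-dimensional and spanned by $\bk_\pm$, so $P_G\bk_\pm=c_\pm\bk_\pm$ for some scalar $c_\pm$, and idempotency $P_G^2=P_G$ forces $c_\pm\in\{0,1\}$.

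To rule out $c_\pm=0$ I will use self-adjointness of $P_G$ together with $1=\tau_1\in\LG$ to write
\[
\langle P_G\bk_\pm,1\rangle_\bq=\langle \bk_\pm,P_G 1\rangle_\bq=\langle \bk_\pm,1\rangle_\bq.
\]
Since $\bk_\pm=\sum_w r^w\tau_w$ with $r^1=1$, this equals $\langle\tau_1,\tau_1\rangle_\bq=1\neq 0$, so $P_G\bk_\pm\neq 0$ and hence $c_\pm=1$, giving $\bk_\pm\in\LG$. (When $\bk_\pm=0$ the inclusion is automatic.)

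The remaining decomposition is then routine. The vectors $\bk_+$ and $\bk_-$ are orthogonal as eigenvectors of a unitary operator for distinct eigenvalues, so setting $K_\emptyset:=(K_+\oplus K_-)^\perp\cap\LG$ immediately yields $\LG=K_+\oplus K_-\oplus K_\emptyset$. The summands $K_\pm$ are $\NG$-modules because, by Proposition~\ref{prop:Wfixesks}, each $g\in G$ fixes $\bk_\pm$ up to sign, and $K_\emptyset$ is an $\NG$-module because $\bbR G$ is closed under the involution $g\mapsto g^{-1}$, so the orthogonal complement inside $\LG$ of an $\bbR G$-invariant closed subspace is again $\bbR G$-invariant. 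The only delicate step is the projection argument establishing $K_\pm\subseteq\LG$; the one-dimensionality of the eigenspaces from Theorem~\ref{thm:W-eigen} is precisely what makes that trick work.
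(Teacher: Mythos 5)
Your proof is correct and follows essentially the same route as the paper's: both use the fact that the orthogonal projection onto $\LG$ commutes with right multiplication by $st$, hence preserves the one-dimensional eigenspaces, and both rule out $P_G\bk_\pm=0$ by pairing $\bk_\pm$ against $1\in\bbR G$. Your version merely makes two small steps slightly more explicit (the idempotency argument for $c_\pm\in\{0,1\}$ and the invariance of $\LG^\perp$), which is fine.
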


\begin{proof}{}
That $\LG$ is $st$-invariant is clear, as is the orthogonality of
$K_+$ and $K_-$ ($st$ is a unitary operator so its eigenspaces
are orthogonal).  It only remains to prove then that $K_{\pm}\subseteq
\LG$.  For this, we use the fact that the orthogonal projection $\pi$
from $\LW$ onto $\LG$ is an $\NG$-module map, hence commutes with
multiplication by $st$.  It follows that $\pi$ must map $st$-eigenspaces to
$st$-eigenspaces (with the same eigenvalue).   Since $K_+$ is spanned by the single
vector $\bk_+$ we must have either $\pi(\bk_+)=\bk_+$ or $\pi(\bk_+)=0$.  In
other words, $\bk_+$ is either in the subspace $\LG$ or it is
orthogonal to it.  To be orthogonal to $\LG$, one would have to
have $\<\bk_+,1\>_{\bq}=0$ since $1\in\bbR G\subseteq\LG$.  But it
follows immediately from the definition of $\bk_+$ that either
$\bk_+$ is zero (in which case $K_+\subseteq\LG$, trivially) or
$\<\bk_+,1\>_{\bq}=1$.  Hence $\bk_+\in\LG$ and so $K_+\subseteq\LG$.
The same argument applied to the $-1$-eigenspace for $st$ shows that
$K_-\subseteq\LG$. 
\end{proof}

It will be convenient to work with the orthogonal projections onto $K_+$
and $K_-$.  Since $K_+$ and $K_-$ are the spans of the single vectors
$\bk_+$ and $\bk_-$, the relevant projections are simply given by 
appropriate scalings.  We define $\tk_+$ and $\tk_-$ by 
\[\tk_+=\frac{\bk_+}{\|\bk_+\|_{\bq}^2}\;\;\mbox{and}\;\;
\tk_-=\frac{\bk_-}{\|\bk_-\|_{\bq}^2},\]
and we define $\tu$ by 
\[\tu=1-\tk_+-\tk_-.\]

\begin{proposition}\label{prop:idempotents}
The elements $\tk_{\pm}$ and $\tu$ are central self-adjoint
idempotents in the von Neumann algebras $\NG$ and $\NW$.  In particular,
multiplication on the right or left by $\tk_{\pm}$ defines orthogonal
projection from $\LG$ onto $K_{\pm}$ and multiplication by $\tu$
defines orthogonal projection from $\LG$ onto $K_{\emptyset}$.
\end{proposition}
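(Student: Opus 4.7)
The plan is to verify the four claims—self-adjointness, idempotence, centrality, and the projection characterization—by assembling Proposition~\ref{prop:Wfixesks}, Proposition~\ref{prop:ksbounded}, and the multiplicative identity $\bk\tau_w=q^w r^w\bk$ from equation (\ref{al:tau-w-k}). The degenerate cases $\bk_+=0$ (when $q_sq_t=1$) and $\bk_-=0$ (when $q_s=q_t$) make the corresponding $\tk_\pm=0$, which is trivially a central self-adjoint idempotent; so I would assume the relevant $\bk_\pm$ is nonzero.

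For self-adjointness I would use $\tau_w^\ast=\tau_{w^{-1}}$ together with the symmetry $r^{w^{-1}}=r^w$ of the coefficients defined in (\ref{al:rw}): the pair $(st)^n$ and $(ts)^n$ are mutual inverses and receive the same coefficient $r_s^n r_t^n$, while the reflections $(st)^n s$ and $t(st)^n$ are palindromic and equal to their own inverses. This gives $\bk_\pm^\ast=\bk_\pm$, and since $\|\bk_\pm\|_\bq^2$ is a positive real, $\tk_\pm^\ast=\tk_\pm$. For idempotence, equation (\ref{al:tau-w-k}) applied termwise gives
\[\bk_\pm^2=\sum_w r^w(\bk_\pm\tau_w)=\Bigl(\sum_w (r^w)^2 q^w\Bigr)\bk_\pm=\|\bk_\pm\|_\bq^2\,\bk_\pm,\]
using (\ref{al:knorm}), and dividing through by $\|\bk_\pm\|_\bq^4$ yields $\tk_\pm^2=\tk_\pm$.

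Centrality follows from Proposition~\ref{prop:Wfixesks}: the identities $s\bk_\pm=\bk_\pm s$ and $t\bk_\pm=\bk_\pm t$ (with matching signs on both sides) show that $\bk_\pm$ commutes with every element of $\bbR W$. Consequently, left-multiplication by $\tk_\pm$ lies in both $\NW$ and in its commutant—generated by right $\bbR W$-multiplication—and hence in $Z(\NW)$; restricting to $\LG\subseteq\LW$ gives the analogous statement in $\NG$. For the projection claim, equation (\ref{al:tau-w-k}) shows that left-multiplication by $\bk_+$ sends each basis vector $\tau_w$ to a scalar multiple of $\bk_+$, so by continuity the image of $\tk_+$ lies in $K_+=\bbR\bk_+$; since $\tk_+\cdot 1=\bk_+/\|\bk_+\|_\bq^2\neq 0$, the image is exactly $K_+$. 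A self-adjoint idempotent with prescribed range is the orthogonal projection onto that range, and centrality makes left- and right-multiplication coincide. The argument for $\tk_-$ is identical.

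Finally, because $K_+$ and $K_-$ are eigenspaces of the unitary operator $st$ for the distinct eigenvalues $\pm 1$, they are mutually orthogonal, which forces $\tk_+\tk_-=\tk_-\tk_+=0$. Hence $\tu=1-\tk_+-\tk_-$ is central and self-adjoint, and $\tu^2=1-2\tk_+-2\tk_-+\tk_++\tk_-+0=\tu$. By Proposition~\ref{prop:G-decomp}, $\LG=K_+\oplus K_-\oplus K_\emptyset$, and $\tu$ acts as the identity on $K_\emptyset$ while annihilating $K_+\oplus K_-$, making it orthogonal projection from $\LG$ onto $K_\emptyset$. I do not anticipate a serious obstacle—the substantive work is already absorbed into Propositions~\ref{prop:Wfixesks} and~\ref{prop:ksbounded} and into identity (\ref{al:tau-w-k})—so the proof is essentially assembly, with the only mild care needed being the palindrome/inversion bookkeeping in the self-adjointness step.
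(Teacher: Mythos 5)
Your proposal is correct and follows essentially the same route as the paper: self-adjointness from the symmetry $r^{w^{-1}}=r^w$ of the coefficients of $\bk_{\pm}$, idempotence from the identity (\ref{al:tau-w-k}) giving $\bk_{\pm}^2=\|\bk_{\pm}\|_{\bq}^2\bk_{\pm}$, and centrality from Proposition~\ref{prop:Wfixesks} plus density of $\bbR W$ in $\LW$. Your treatment of $\tu$ and of the degenerate cases $\bk_{\pm}=0$ is slightly more explicit than the paper's, but the substance is the same.
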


\begin{proof}{}
Since $\tk_{\pm}$ are multiples of $\bk_{\pm}$, by
Proposition~\ref{prop:ksbounded} they are elements of $\NG$ and $\NW$.  Since $\tu$ is a finite linear
combination of $1$, $\tk_+$ and $\tk_-$, it is in $\NG$ and $\NW$ as
well. Since all three of these operators commute with every element of
$\bbR W$ (by Proposition~\ref{prop:Wfixesks}) and $\bbR W$ is dense in
$\LW$, they are all central.  Self-adjointness follows from the
explicit formulas for $\bk_+$ and $\bk_-$, in which the coefficient
of $\tau_w$ is always the same as the coefficient of
$\tau_w^*=\tau_{w^{-1}}$.  It remains to show that they are all
idempotent. If $\bk$ denotes $\bk_+$ or $\bk_-$, then we have 
\[\bk=\sum_w r^w\tau_w\]
with $r^w$ given by (\ref{al:rw}), hence by (\ref{al:tau-w-k}) we have 
\[\bk^2=\sum_w r^w\tau_w\bk=\sum_w (r^w)^2q^w\bk=\|k\|_{\bq}^2\bk.\]
Dividing both sides by $\|\bk\|_{\bq}^2$ gives $\tk^2=\tk$.
The operator $\tu=1-\tk_+-\tk_1$ is idempotent because it is the 
orthogonal projection onto the complement of $K_+$ and $K_-$.
\end{proof}

Using these idempotents, we can compute $\NG$-dimensions of the various
pieces in our decomposition.

\begin{lemma}\label{lem:Gdims}
The von Neumann dimensions of the $\NG$-modules $K_+$, $K_-$, and
$K_{\emptyset}$ are all piecewise rational functions of the form 
\[n_{\emptyset}+\frac{n_s}{1+q_s}+\frac{n_t}{1+q_t}.\]
where $n_{\emptyset},n_s,n_t$ are piecewise constant integer functions
of $\bq$.  More precisely, we have
\[\dim^{\bq}_G K_+=\frac{|1-q_sq_t|}{(1+q_s)(1+q_t)},\hspace{.5in}
\dim^{\bq}_G K_-=\frac{|q_t-q_s|}{(1+q_s)(1+q_t)},\]
and
\[\dim^{\bq}_G K_{\emptyset}=\left\{\begin{array}{ll}
\displaystyle\frac{2q_s}{1+q_s} & \mbox{if $q_sq_t\leq 1$ and $q_s\leq q_t$,}\\
\displaystyle\frac{2q_t}{1+q_t} & \mbox{if $q_sq_t\leq 1$ and $q_s\geq q_t$,}\\
\displaystyle\frac{2}{1+q_t} & \mbox{if $q_sq_t\geq 1$ and $q_s\leq q_t$,}\\
\displaystyle\frac{2}{1+q_s} & \mbox{if $q_sq_t\geq 1$ and $q_s\geq q_t$.}\end{array}\right.\]
\end{lemma}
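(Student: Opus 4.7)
The plan is to compute each dimension using the explicit projection formulas from Proposition~\ref{prop:idempotents}. Recall that the von Neumann dimension of a Hilbert $\NG$-module $V$ equals $\langle\pr_V(1),1\rangle_{\bq}$, and that the orthogonal projection onto $K_\pm$ is right multiplication by $\tk_\pm=\bk_\pm/\|\bk_\pm\|_{\bq}^2$. Since the definition of $\bk(r_s,r_t)=\sum_w r^w\tau_w$ in (\ref{al:rw}) gives $r^1=1$, the coefficient of $\tau_1$ in $\bk_\pm$ is $1$, so
\[
\langle\bk_\pm,1\rangle_{\bq}=\langle\bk_\pm,\tau_1\rangle_{\bq}=q^1=1,
\]
and therefore
\[
\dim_G^{\bq}K_\pm=\langle\tk_\pm,1\rangle_{\bq}=\frac{1}{\|\bk_\pm\|_{\bq}^2}.
\]

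Next I would substitute into the norm formula (\ref{al:knorm}) in each of the relevant cases. For $K_+$: when $q_sq_t<1$ the vector $\bk_+=\bk(1,1)$ has norm-square $(1+q_s)(1+q_t)/(1-q_sq_t)$; when $q_sq_t>1$ the vector $\bk_+=\bk(-1/q_s,-1/q_t)$ gives, after clearing denominators, norm-square $(1+q_s)(1+q_t)/(q_sq_t-1)$; and when $q_sq_t=1$ the module $K_+$ is zero. Inverting, the three cases collapse into the single formula $\dim_G^{\bq}K_+=|1-q_sq_t|/((1+q_s)(1+q_t))$. The calculation for $K_-$ is entirely analogous, using $\bk(1,-1/q_t)$ when $q_s<q_t$ and $\bk(-1/q_s,1)$ when $q_s>q_t$, and yields $\dim_G^{\bq}K_-=|q_t-q_s|/((1+q_s)(1+q_t))$.

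For $K_\emptyset$, I would use the identity $\tu=1-\tk_+-\tk_-$ from the definition, together with the fact that $\dim_G^{\bq}\LG=\langle 1,1\rangle_{\bq}=1$, to conclude
\[
\dim_G^{\bq}K_\emptyset=1-\dim_G^{\bq}K_+-\dim_G^{\bq}K_-.
\]
In each of the four regions ($q_sq_t\lessgtr 1$ combined with $q_s\lessgtr q_t$), this is a short algebraic simplification. For instance, when $q_sq_t\leq 1$ and $q_s\leq q_t$, the numerator collapses to $(1+q_s)(1+q_t)-(1-q_sq_t)-(q_t-q_s)=2q_s(1+q_t)$, and dividing by $(1+q_s)(1+q_t)$ gives $2q_s/(1+q_s)$. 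The other three cases work the same way by swapping the signs of $(1-q_sq_t)$ and $(q_t-q_s)$ appropriately.

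There is no serious obstacle: the only conceptual ingredients are the trace formula for the dimension, the explicit projection from Proposition~\ref{prop:idempotents}, and the geometric series (\ref{al:knorm}). The main bookkeeping hazard is making sure the four sign cases for $K_\emptyset$ are paired correctly with the signs on $|1-q_sq_t|$ and $|q_t-q_s|$, but this is purely routine arithmetic.
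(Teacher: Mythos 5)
Your proposal is correct and follows essentially the same route as the paper: compute $\dim_G^{\bq}K_\pm=\langle\tk_\pm,1\rangle_{\bq}=1/\|\bk_\pm\|_{\bq}^2$ using $\langle\bk_\pm,1\rangle_{\bq}=1$, substitute into the norm formula (\ref{al:knorm}), and obtain $\dim_G^{\bq}K_{\emptyset}$ by subtracting from $\dim_G^{\bq}\LG=1$. The case checks and sign bookkeeping are exactly as in the paper's argument.
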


\begin{proof}{}
By definition of von Neumann dimension and the idempotents
$\tk_{\pm}$, we have  
\[\dim_G^{\bq} K_{\pm}=\<\tk_{\pm},1\>_{\bq}=\frac{1}{\|\bk_{\pm}\|_{\bq}^2}\<\bk_{\pm},1\>_{\bq}=\frac{1}{\|\bk_{\pm}\|_{\bq}^2}.\]
Substituting $(r_s,r_t)=(1,1)$ and $(r_s,r_t)=(-1/q_s,-1/q_t)$ into
(\ref{al:knorm}) to get $\|\bk_+\|_{\bq}^2$, we obtain 
\begin{align}\label{al:dimK+}
\dim^{\bq}_G K_+ =\<\tk_+,1\>_{\bq}=\frac{|1-q_sq_t|}{(1+q_s)(1+q_t)},
\end{align}
and substituting $(r_s,r_t)=(1,-1/q_t)$ and $(r_s,r_t)=(-1/q_t,1)$ into
(\ref{al:knorm}) to get $\|\bk_-\|_{\bq}^2$, we obtain 
\begin{align}\label{al:dimK-}
\dim^{\bq}_G K_-=\<\tk_-,1\>_{\bq}=\frac{|q_t-q_s|}{(1+q_s)(1+q_t)}.
\end{align}
Since $K_{\emptyset}$ is the orthogonal complement of $K_+$ and $K_-$
in $\LG$ and $\dim_G^{\bq}\LG=1$, we have 
\[\dim^{\bq}_GK_{\emptyset}=1-\frac{|1-q_sq_t|}{(1+q_s)(1+q_t)}-\frac{|q_t-q_s|}{(1+q_s)(1+q_t)}\]
which simplifies to the given formulas in the four cases indicated.

To see that all of these expressions are piecewise rational functions of the
indicated form, simply note that 
\[\frac{1-q_sq_t}{(1+q_s)(1+q_t)}=-1+\frac{1}{1+q_s}+\frac{1}{1+q_t},\]
\[\frac{q_t-q_s}{(1+q_s)(1+q_t)}= \frac{1}{1+q_s}-\frac{1}{1+q_t},\]
and 
\[\frac{2q}{1+q}=2-\frac{2}{1+q}.\]
\end{proof}

We now extend the orthogonal decomposition of $\LG$ to any Hilbert
$\NG$-module.  By Proposition~\ref{prop:G-decomp}, we can identify
$\LG^n$ with the orthogonal sum $(K_+)^n\oplus (K_-)^n\oplus (K_{\emptyset})^n$.

\begin{proposition}\label{prop:V-G-decomp} 
Let $V\subseteq \LG^n$ be a closed subspace that is invariant with
respect to the diagonal left $\bbR G$-action, and let $V_+=\tk_+ V$,  
$V_-=\tk_-V$, $V_{\emptyset}=\tu V$.  Then we have an orthogonal
decomposition 
\[V=V_+\oplus V_-\oplus V_{\emptyset}\]
with $V_+\subseteq (K_+)^n$, $V_-\subseteq (K_-)^n$ and
$V_{\emptyset}\subseteq (K_{\emptyset})^n$.
\end{proposition}  

\begin{proof}{}
By Proposition~\ref{prop:idempotents}, $\tk_+$, $\tk_-$, and $\tu$
are all elements of $\NG$ and define orthogonal projections from $\LG$
onto $K_+$, $K_-$, and $K_{\emptyset}$, respectively.  It follows that
diagonal left multiplication by these elements on $\LG^n$ defines
orthogonal projection onto the subspaces $(K_+)^n$, $(K_-)^n$,
$(K_{\emptyset})^n$, respectively.  It follows that the summands $V_+$,
$V_-$, $V_{\emptyset}$ are orthogonal.  Since $V$ is a left $\NG$-module,
each of the summands $V_+$, $V_-$, and $V_{\emptyset}$ must be
contained in $V$, so we have 
\[V\supseteq V_+\oplus V_-\oplus V_{\emptyset}.\]
On the other hand, since $1=\tk_++\tk_-+\tu$, we know that
$x=\tk_+x+\tk_-x+\tu x$ for any $x\in V$, giving us the opposite
inclusion.
\end{proof}

To extend our decomposition of $\LG$ to a decomposition of $\LW$, we
note that $\LW$ is spanned by $\LG$ and its translate $\LG s$.  By
Proposition~\ref{prop:Wfixesks}, both $K_+$ and $K_-$ are also
contained in $\LG s$, suggesting the following decomposition for
$\LW$. 

\begin{proposition}\label{prop:W-decomp}
We have an orthogonal decomposition of $\NG$-modules given by  
\[\LW=K_+\oplus K_-\oplus K_{\emptyset}\oplus K_{\emptyset}s.\]
Moreover, $K_{\emptyset}$ and $K_{\emptyset}s$ are isomorphic as $\NG$-modules.
\end{proposition}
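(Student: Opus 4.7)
The strategy is as follows. From the coset decomposition $W=G\sqcup Gs$ and density of $\bbR W$ in $\LW$, every element of $\LW$ lies in the closure of $\LG+\LG s$. Applying Proposition~\ref{prop:G-decomp} and right-translating by $s$ (an isometry since $s^2=1$ and $s$ is self-adjoint, and which sends $K_\pm$ to $\pm K_\pm=K_\pm$ by Proposition~\ref{prop:Wfixesks}), we obtain $\LG s=K_+\oplus K_-\oplus K_\emptyset s$, and hence $\bbR W\subseteq K_++K_-+K_\emptyset+K_\emptyset s$. Most pairwise orthogonalities are immediate: those among $K_+,K_-,K_\emptyset$ come from Proposition~\ref{prop:G-decomp}, and for $x\in K_\emptyset$ and $y\in K_\pm$, Proposition~\ref{prop:gens-adjoint} together with $ys=\pm y$ yields $\<xs,y\>_{\bq}=\<x,ys\>_{\bq}=\pm\<x,y\>_{\bq}=0$. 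Granting the remaining orthogonality $K_\emptyset\perp K_\emptyset s$, the four subspaces are mutually orthogonal, so their sum is an orthogonal direct sum (hence closed), and density of $\bbR W$ upgrades the inclusion to the equality $\LW=K_+\oplus K_-\oplus K_\emptyset\oplus K_\emptyset s$. The isomorphism $K_\emptyset\cong K_\emptyset s$ of $\NG$-modules is realized by right multiplication by $s$, which is $\NG$-equivariant (left and right multiplication commute), self-inverse ($s^2=1$), and isometric.

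The remaining orthogonality $K_\emptyset\perp K_\emptyset s$, via self-adjointness of $s$, is equivalent to $\pi_G(\LG s)\subseteq K_+\oplus K_-$, where $\pi_G:\LW\to\LG$ is the orthogonal projection. The bounded operator $T:\LG\to\LG$ defined by $T(y)=\pi_G(ys)$ is $\NG$-equivariant, because left multiplication by $\bbR G$ commutes with both $\pi_G$ (which preserves $\LG$) and right multiplication by $s$. Since $K_+\oplus K_-$ is $\bbR G$-invariant (every $g\in G$ fixes $\bk_+$ and multiplies $\bk_-$ by $\pm 1$) and $1$ is cyclic for $\NG$ acting on $\LG$, continuity reduces the desired containment $T(\LG)\subseteq K_+\oplus K_-$ to the single key lemma $\pi_G(s)=T(1)\in K_+\oplus K_-$.

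The main obstacle is this key lemma. Equivalently, one must show $\tu s\perp\bbR G$, where $\tu=1-\tk_+-\tk_-$. Writing $\tk_\pm s=\epsilon_\pm\tk_\pm$ with signs $\epsilon_\pm\in\{\pm 1\}$ dictated by Proposition~\ref{prop:Wfixesks}, and using that $\bk_\pm$ is a $\pm1$-eigenvector for right multiplication by $st$ to deduce $\<\tk_+,(st)^n\>_{\bq}=1/\|\bk_+\|_{\bq}^2$ and $\<\tk_-,(st)^n\>_{\bq}=(-1)^n/\|\bk_-\|_{\bq}^2$, the claim reduces to an explicit formula for $g(n):=\<s,(st)^n\>_{\bq}=\<1,s(st)^n\>_{\bq}$. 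Direct expansion in the $\tau$-basis yields the base values $g(0)=(1-q_s)/(1+q_s)$ and $g(1)=(1-q_t)/(1+q_t)$, and the two-step recurrence $g(n+2)=g(n)$ follows by combining the length reduction $s(st)^{k+1}=t(st)^k$ with the adjoint identity $\<1,yz\>_{\bq}=\<z^*,y\>_{\bq}$ and the relation $t^2=1$. The resulting closed form $g(n)=A+B(-1)^n$ with explicit constants $A,B$ matches $\epsilon_+/\|\bk_+\|_{\bq}^2+\epsilon_-(-1)^n/\|\bk_-\|_{\bq}^2$ after a short algebraic verification using formula~(\ref{al:knorm}).
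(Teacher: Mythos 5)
Your proposal is correct and follows essentially the same route as the paper's: the direct-sum statement and the easy orthogonalities are obtained identically from Proposition~\ref{prop:Wfixesks} and the isometry of right multiplication by $s$, and the crux $K_{\emptyset}\perp K_{\emptyset}s$ is in both cases reduced to showing $s\tk_{\emptyset}\perp\bbR G$, verified by the same algebra with $\|\bk_{\pm}\|_{\bq}^2$ and (\ref{al:knorm}). The only difference is bookkeeping: you obtain the reduction from the $\NG$-equivariance of $y\mapsto\pi(ys)$ (with $\pi$ the projection onto $\LG$) together with the $2$-periodicity of $\<s,(st)^n\>_{\bq}$, whereas the paper uses the conjugation identity $\<sxs\tk_{\emptyset},1\>_{\bq}=\<x\tk_{\emptyset},1\>_{\bq}$ to reduce everything to the two evaluations $\<s\tk_{\emptyset},1\>_{\bq}=\<t\tk_{\emptyset},1\>_{\bq}=0$ --- the same two scalar computations you perform.
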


\begin{proof}{}
Right multiplication by $s$ is a self-adjoint involution, hence an
isometry.  It follows that (1) $K_{\emptyset}$ maps isomorphically
(isometrically and equivariantly with respect to the left $\bbR W$-action) to
$K_{\emptyset}s$, and (2) preserves orthogonality in $\LW$.  The
latter implies that 
\[\LG s=(K_+\oplus K_-\oplus K_{\emptyset})s=(K_+s\oplus K_-s\oplus
    K_{\emptyset}s)=(K_+\oplus K_-\oplus
    K_{\emptyset}s),\]
where the last equality follows from Proposition~\ref{prop:Wfixesks}.
Since $\LW$ is spanned by $\LG$ and $\LG s$, we have
\begin{align*}
\LW &= \LG + \LG s\\
    &= (K_+\oplus K_-\oplus K_{\emptyset})+ (K_+\oplus K_-\oplus
    K_{\emptyset}s)\\
&=K_+\oplus K_-\oplus (K_{\emptyset}+ K_{\emptyset}s).
\end{align*}

The only thing left to prove is that $K_{\emptyset}$ and
$K_{\emptyset}s$ are orthogonal.  Since $G$ spans a dense subspace of
$\LG$, we know that $\{(st)^n\tk_{\emptyset}\;|\; n\in\bbZ\}$ spans a
dense subspace of $K_{\emptyset}$, and $\{(st)^ns\tk_{\emptyset}\;|\;
n\in\bbZ\}$ spans a dense subspace of $K_{\emptyset}s$.  It therefore
suffices to prove that 
\[\<(st)^n\tk_{\emptyset},(st)^ms\tk_{\emptyset}\>_{\bq}=0\]
for all $m,n\in\bbZ$.  Using the fact that $\tk_{\emptyset}$ is a self
adjoint idempotent and $(st)^*=(st)^{-1}$, we have 
\begin{align}\label{al:IP}
\<(st)^n\tk_{\emptyset},(st)^ms\tk_{\emptyset}\>_{\bq}=
\<s(st)^{n-m}\tk_{\emptyset}^2,1\>_{\bq}=
\<s(st)^{n-m}\tk_{\emptyset},1\>_{\bq}.
\end{align}
But since $\tk_{\emptyset}$ is central, we have (for any $x\in\LW$)
\[\<sxs\tk_{\emptyset},1\>_{\bq}=\<xs\tk_{\emptyset},s\>_{\bq}=\<xs^2\tk_{\emptyset},1\>_{\bq}=\<x\tk_{\emptyset},1\>_{\bq}\]
and, similarly, 
\[\<txt\tk_{\emptyset},1\>_{\bq}=\<x\tk_{\emptyset},1\>_{\bq}.\]
Repeated applications of this identity then reduce (\ref{al:IP}) to 
\[\<(st)^n\tk_{\emptyset},(st)^ms\tk_{\emptyset}\>_{\bq}=\left\{
\begin{array}{ll}
\<s\tk_{\emptyset},1\>_{\bq}&\mbox{if $n-m$ is even,}\\
\<t\tk_{\emptyset},1\>_{\bq}&\mbox{if $n-m$ is odd.}\end{array}\right.\]
By definition of $\tk_{\emptyset}$ and
Proposition~\ref{prop:Wfixesks}, we have  
\begin{eqnarray*}
\<s\tk_{\emptyset},1\>_{\bq}&=&\<s,1\>_{\bq}-\<s\tk_+,1\>_{\bq}-
\<s\tk_-,1\>_{\bq}\\
&=&\<s,1\>_{\bq}-\sigma_1\<\tk_+,1\>_{\bq}-\sigma_2\<\tk_-,1\>_{\bq}
\end{eqnarray*}
where $\sigma_1$ is $+1$ (resp., $-1$) if $q_sq_t<1$
(resp. $q_sq_t>1$) and $\sigma_2$ is $+1$ (resp., $-1$) if $q_s<q_t$
(resp. $q_s>q_t$).  Since
$s=\frac{1-q_s}{1+q_s}+\frac{2}{1+q_s}\tau_s$, we have
$\<s,1\>_{\bq}=\frac{1-q_s}{1+q_s}$, and hence by (\ref{al:dimK+}) and
(\ref{al:dimK-}) we have 
\[\<s\tk_{\emptyset},1\>_{\bq}=\frac{1-q_s}{1+q_s}-\frac{1-q_sq_t}{(1+q_s)(1+q_t)}-\frac{q_t-q_s}{(1+q_s)(1+q_t)}=0.\]
A similar calculation gives
\[\<t\tk_{\emptyset},1\>_{\bq}=\frac{1-q_t}{1+q_t}-\frac{1-q_sq_t}{(1+q_s)(1+q_t)}-\frac{q_s-q_t}{(1+q_s)(1+q_t)}=0.\]
This completes the proof.
\end{proof}

We now extend our orthogonal decomposition of $\LW$ to any Hilbert
$\NW$-module. By Proposition~\ref{prop:W-decomp}, we can identify
$(\LW)^n$ with $(K_+)^n\oplus (K_-)^n\oplus(K_{\emptyset}\oplus K_{\emptyset} s)^n$.

\begin{proposition}\label{prop:V-W-decomp} 
Let $V\subseteq(\LW)^n$ be a closed
subspace that is invariant with respect to the diagonal left $\bbR
W$-action, and let $V_+=\tk_+ V$, $V_-=\tk_-V$, $V_{\emptyset}=\tu V$.
Then we have an orthogonal decomposition  
\[V=V_+\oplus V_-\oplus V_{\emptyset}\]
with $V_+\subseteq (K_+)^n$, $V_-\subseteq (K_-)^n$ and
$V_{\emptyset}\subseteq (K_{\emptyset}\oplus K_{\emptyset} s)^n$.
\end{proposition}  

\begin{proof}{}
The proof is the same as the proof of
Proposition~\ref{prop:V-G-decomp}.  The only difference is that as an
operator on $\LW$, the idempotent $\tk_{\emptyset}$ projects onto the
orthogonal complement of $K_+\oplus K_-$ {\em in $\LW$}, which is now
$K_{\emptyset}\oplus K_{\emptyset}s$.
\end{proof}

Any $\NW$-module is naturally an $\NG$-module, hence we can
ask for its von Neumann dimension with respect to either structure.
The following lemma relates the two.

\begin{lemma}\label{lem:dimG2W}
Let $V\subseteq(\LW)^n$ be a Hilbert $\NW$-module.  Then 
\begin{enumerate}
\item $\dim_W^{\bq}V_+=\dim_G^{\bq}V_+$,
\item $\dim_W^{\bq}V_-=\dim_G^{\bq}V_-$, and 
\item
  $\dim_W^{\bq}V_{\emptyset}=\frac{1}{2}\dim_G^{\bq}V_{\emptyset}$.
\end{enumerate}
\end{lemma}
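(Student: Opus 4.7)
\vspace{2ex}
\noindent\textbf{Proof proposal.} The plan is to handle (1) and (2) by observing that the two defining sums reduce to the same expression, and to prove (3) by combining an isometric $\NG$-decomposition of $K_{\emptyset}\oplus K_{\emptyset}s$ with a trace-cyclicity argument to account for the factor of two.

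For parts (1) and (2), I would start from the inclusions $V_{+}\subseteq K_{+}^{\,n}\subseteq (\LG)^n$ and $V_{-}\subseteq K_{-}^{\,n}\subseteq (\LG)^n$ and note that the standard basis vector $\epsilon_i$ uses the unit $1\in\bbR G\subseteq\bbR W$, so it is simultaneously the $i$-th basis vector of $(\LG)^n$ and of $(\LW)^n$. Any orthogonal decomposition $\epsilon_i=v+w$ with $v\in V_{\pm}$ and $w\perp V_{\pm}$ within $(\LG)^n$ is automatically an orthogonal decomposition in $(\LW)^n$, since the inner product on $\LG$ is the restriction of that on $\LW$ and $V_{\pm}$ sits inside $(\LG)^n$. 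By uniqueness of the orthogonal projection, $\pr_{V_{\pm}}$ computed in either ambient space yields the same value on $\epsilon_i$, and the defining sums for $\dim_W^{\bq}V_{\pm}$ and $\dim_G^{\bq}V_{\pm}$ agree term-by-term.

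For part (3), I would invoke Proposition~\ref{prop:W-decomp} and the isometric $\NG$-module isomorphism
\[
\Phi:(K_{\emptyset}\oplus K_{\emptyset}s)^n\longrightarrow K_{\emptyset}^{\,2n}\subseteq(\LG)^{2n},\qquad (x_i+y_is)_{i=1}^n\mapsto (x_1,\ldots,x_n,y_1,\ldots,y_n),
\]
which is isometric because $K_{\emptyset}\perp K_{\emptyset}s$ in $\LW$ and right multiplication by $s$ is a self-adjoint involution (Proposition~\ref{prop:gens-adjoint} together with $s^2=1$), and $\NG$-equivariant because the left $G$-action commutes with right multiplication. Writing $\eta_j$ for the standard basis of $(\LG)^{2n}$ and $V'=\Phi(V_{\emptyset})$, one checks $\Phi(\tu\epsilon_i)=\tu\eta_i$ and $\Phi(\tu\epsilon_i s)=\tu\eta_{n+i}$. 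Since $(1-\tu)\epsilon_i,(1-\tu)\epsilon_is\in K_{+}^n\oplus K_{-}^n$ are orthogonal to $V_{\emptyset}$, we have $\pr_{V_{\emptyset}}(\epsilon_i)=\pr_{V_{\emptyset}}(\tu\epsilon_i)$ and similarly for $\epsilon_is$; the analogous identity $\pr_{V'}(\eta_j)=\pr_{V'}(\tu\eta_j)$ holds inside $(\LG)^{2n}$. Splitting the $2n$-term sum defining $\dim_G^{\bq}V'$ into the halves $j\le n$ and $j>n$, the isometry $\Phi$ gives
\begin{align*}
\dim_G^{\bq}V_{\emptyset}\;=\;\sum_{i=1}^n\|\pr_{V_{\emptyset}}(\epsilon_i)\|_{\bq}^2\;+\;\sum_{i=1}^n\|\pr_{V_{\emptyset}}(\epsilon_is)\|_{\bq}^2,
\end{align*}
the first half being exactly $\dim_W^{\bq}V_{\emptyset}$.

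To conclude, I would invoke trace cyclicity. The projection $P:=\pr_{V_{\emptyset}}$ commutes with the left $\bbR W$-action (by $\bbR W$-invariance of $V_{\emptyset}$), hence lies in the matrix algebra $M_n(\NW)$ of right-multiplication operators on $(\LW)^n$. The canonical tracial state $\operatorname{Tr}$ on this algebra satisfies $\operatorname{Tr}(A)=\sum_i\<A\epsilon_i,\epsilon_i\>_{\bq}$, so $\operatorname{Tr}(P)=\dim_W^{\bq}V_{\emptyset}$. The diagonal right multiplication $U\in M_n(\NW)$ by $s$ is a self-adjoint unitary (since $s^{\ast}=s$, $s^2=1$). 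By cyclicity, $\operatorname{Tr}(UPU)=\operatorname{Tr}(PU^2)=\operatorname{Tr}(P)=\dim_W^{\bq}V_{\emptyset}$, while a direct expansion using $U^{\ast}=U$ and $P^{\ast}=P=P^2$ yields
\[
\operatorname{Tr}(UPU)\;=\;\sum_{i=1}^n\<PU\epsilon_i,U\epsilon_i\>_{\bq}\;=\;\sum_{i=1}^n\|P(\epsilon_is)\|_{\bq}^2.
\]
Combining these identifies the second half of the previous display with $\dim_W^{\bq}V_{\emptyset}$, so $\dim_G^{\bq}V_{\emptyset}=2\dim_W^{\bq}V_{\emptyset}$, proving (3). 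I expect the hard step to be precisely this last equality $\sum_i\|P(\epsilon_is)\|_{\bq}^2=\dim_W^{\bq}V_{\emptyset}$: the naive route of transporting by the isometry $x\mapsto xs$ fails because $V_{\emptyset}$ need not be right $\bbR W$-invariant, so $P$ and $U$ need not commute in $M_n(\NW)$. The trace-cyclicity argument sidesteps this, as it only requires that $UPU$ and $P$ have the same trace, not that they be equal.
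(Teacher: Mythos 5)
Your proof is correct, and most of it follows the paper's own route: for (1) and (2) the paper likewise reduces to the observation that the orthogonal projection onto $V_{\pm}$ is computed by the same formula in both ambient spaces (it phrases this as ``the projection from $\LW^n$ to $V_+$, and hence from $\LG^n$ to $V_+$, are both given by $\pi_+\tk_+$''), and for (3) it uses exactly your isometric, left $\bbR G$-equivariant identification $(K_{\emptyset}\oplus K_{\emptyset}s)^n\cong K_{\emptyset}^{2n}\subseteq(\LG)^{2n}$ and splits the resulting $2n$-term dimension sum into two halves, the first of which is $\dim_W^{\bq}V_{\emptyset}$. Where you genuinely diverge is in showing that the second half, $\sum_i\|\pr_{V_{\emptyset}}(\epsilon_i s)\|_{\bq}^2$, also equals $\dim_W^{\bq}V_{\emptyset}$. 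The paper does this by an explicit chain of equalities, repeatedly using that $\tk_{\emptyset}$ is a central self-adjoint idempotent, that $s$ is self-adjoint with $s^2=1$, and that $\pi_{\emptyset}$ commutes with the left $\bbR W$-action, ending with $\<\pi_{\emptyset}(s\tk_{\emptyset}^2s\epsilon_i),\epsilon_i\>_{\bq}=\<\pi_{\emptyset}(\tk_{\emptyset}\epsilon_i),\epsilon_i\>_{\bq}$. You instead package that step as cyclicity of the canonical trace on the commutant of the left action, applied to the self-adjoint unitary $U=R_s$; this is shorter and, as you correctly note, sidesteps the fact that $V_{\emptyset}$ need not be right $\bbR W$-invariant (so $P$ and $U$ need not commute). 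The one extra input your version needs is that $A\mapsto\sum_i\<A\epsilon_i,\epsilon_i\>_{\bq}$ really is a trace on $M_n(\NW)$. That is true here, and follows from the Hilbert-algebra structure on $\bbR_{\bq}W$ (Dymara, Dixmier; cf.\ the proof of Proposition~\ref{prop:gens-adjoint}); the paper already relies on it implicitly for the isomorphism-invariance of von Neumann dimension, but you should cite it explicitly, since the paper's hands-on computation is in effect a direct verification of precisely this instance of traciality. In short: same decomposition and same skeleton, with a more conceptual (trace-theoretic) justification of the factor of $2$, at the cost of invoking one standard fact the paper chooses to verify by hand.
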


\begin{proof}{}
We identify $(\LW)^n$ with $(K_+)^n\oplus (K_-)^n\oplus(K_{\emptyset}\oplus
K_{\emptyset}s)^n$.  To prove (1) and (2), let $\pi_+:(K_+)^n\rightarrow
(K_+)^n$ and $\pi_-:(K_-)^n\rightarrow (K_-)^n$ denote orthogonal
projections onto $V_+$ and $V_-$, respectively.  By composing
projections, we then have that the orthogonal projection from
$(\LW)^n$ to $V_+$, and hence from $\LG^n$ to $V+$, are both given by
$\pi_+\tk_+$.  Similarly, the orthogonal projection from 
$(\LW)^n$ to $V_-$ is given by $\pi_-\tk_-$.  Let
$\epsilon_1,\ldots,\epsilon_n$ be the standard  basis for $(\LW)^n$ as a
free $\NW$-module.  Then it can also be regarded as the standard basis
for the subspace $\LG^n$ regarded as a free $\NG$-module.  Hence, we
have 
\[\dim^{\bq}_GV_+=\sum_{i=1}^n\<\pi_+(\tk_+\epsilon_i),\epsilon_i\>=\dim^{\bq}_W
V_+,\]
and
\[\dim^{\bq}_GV_-=\sum_{i=1}^n\<\pi_-(\tk_-\epsilon_i),\epsilon_i\>=\dim^{\bq}_W
V_-.\]

To prove (3), we let $\pi_{\emptyset}:(K_{\emptyset}\oplus
K_{\emptyset}s)^n\rightarrow (K_{\emptyset}\oplus
K_{\emptyset}s)^n $ be orthogonal projection onto $V_{\emptyset}$.
Again by composing projections, we have that the orthogonal
projection from $(\LW)^n$ to $V_{\emptyset}$ is given by
$\pi_{\emptyset}k_{\emptyset}$, and hence
\[\dim_W^{\bq}V_{\emptyset}=\sum_{i=1}^n\<\pi_{\emptyset}(k_{\emptyset}\epsilon_i),\epsilon_i\>.\]
To calculate the dimension of $V_{\emptyset}$ as an $\NG$-module, we
shall embed it in the free $\NG$-module $\LG^n\oplus\LG^n$.  We let
$\epsilon_1,\ldots,\epsilon_n$ denote the standard basis for the first
summand of $\LG^n\oplus\LG^n$ and $\epsilon_1',\ldots,\epsilon_n'$
denote the standard basis for the second summand.  
We then define 
\[\phi:(K_{\emptyset}\oplus
K_{\emptyset}s)^n\rightarrow\LG^n\oplus\LG^n\] 
by $\phi(x_1+x_1's,\ldots,x_n+x_n's)\mapsto
((x_1,\ldots,x_n),(x_1',\ldots,x_n'))$.  This map is an isometric
embedding, equivariant with respect to the left $\bbR G$-action, and
the image is $(K_{\emptyset})^n\oplus (K_{\emptyset})^n$.  As an $\NG$-module
$(K_{\emptyset}\oplus K_{\emptyset}s)^n$ is generated by
$\tk_{\emptyset}\epsilon_1,\ldots,\tk_{\emptyset}\epsilon_n$ and  
$\tk_{\emptyset}s\epsilon_1,\ldots,\tk_{\emptyset}s\epsilon_n$.
The images of these generators are given by
$\phi(\tk_{\emptyset}\epsilon_i)=\tk_{\emptyset}\epsilon_i$ and 
$\phi(\tk_{\emptyset}s\epsilon_i)=\tk_{\emptyset}\epsilon_i'$.
As an $\NG$-module $V_{\emptyset}$ is isomorphic to the image
$\phi(V_{\emptyset})\subseteq\LG^n\oplus\LG^n$, and orthogonal
projection onto this image is given by the composition
$\phi\pi_{\emptyset}\phi^{-1}\tk_{\emptyset}$.  We can therefore compute 
\begin{flalign*}
\dim_G^{\bq}V_{\emptyset} & = \dim_G^{\bq}\phi(V_{\emptyset})\\
&= \sum_{i=1}^{n}
\<\phi\pi_{\emptyset}\phi^{-1}\tk_{\emptyset}(\epsilon_i),\epsilon_i\>+
\sum_{i=1}^{n}
\<\phi\pi_{\emptyset}\phi^{-1}\tk_{\emptyset}(\epsilon_i'),\epsilon_i'\> 
 &&\\
&\hspace{.5in}\mbox{(definition of $\dim^{\bq}_G$)}\\ 
& = \sum_{i=1}^{n}
\<\phi\pi_{\emptyset}\phi^{-1}\tk_{\emptyset}^2(\epsilon_i),\epsilon_i\>+
\sum_{i=1}^{n}
\<\phi\pi_{\emptyset}\phi^{-1}\tk_{\emptyset}^2(\epsilon_i'),\epsilon_i'\>&&\\
&\hspace{.5in}\mbox{($\tk_{\emptyset}$ is idempotent)}\\
& = \sum_{i=1}^{n}
\<\tk_{\emptyset}\phi\pi_{\emptyset}\phi^{-1}\tk_{\emptyset}(\epsilon_i),\epsilon_i\>+
\sum_{i=1}^{n}
\<\tk_{\emptyset}\phi\pi_{\emptyset}\phi^{-1}\tk_{\emptyset}(\epsilon_i'),\epsilon_i'\>\\
&\hspace{.5in}\mbox{($\phi\pi_{\emptyset}\phi^{-1}\tk_{\emptyset}$ is $\NG$-equivariant)}\\
& = \sum_{i=1}^{n}
\<\phi\pi_{\emptyset}\phi^{-1}\tk_{\emptyset}(\epsilon_i),\tk_{\emptyset}\epsilon_i\>+
\sum_{i=1}^{n}
\<\phi\pi_{\emptyset}\phi^{-1}\tk_{\emptyset}(\epsilon_i'),\tk_{\emptyset}\epsilon_i'\>&&\\
&\hspace{.5in}\mbox{($\tk_{\emptyset}$ is self-adjoint)}\\
& = \sum_{i=1}^{n}
\<\phi\pi_{\emptyset}(\tk_{\emptyset}\epsilon_i),\phi(\tk_{\emptyset}\epsilon_i)\>+
\sum_{i=1}^{n}
\<\phi\pi_{\emptyset}(\tk_{\emptyset}s\epsilon_i),\phi(\tk_{\emptyset}s\epsilon_i\>&&\\
&\hspace{.5in}\mbox{(definition
  of $\phi$)}\\
& = \sum_{i=1}^{n}
\<\pi_{\emptyset}(\tk_{\emptyset}\epsilon_i),\tk_{\emptyset}\epsilon_i\>+
\sum_{i=1}^{n}
\<\pi_{\emptyset}(\tk_{\emptyset}s\epsilon_i),\tk_{\emptyset}s\epsilon_i\>&&\\
&\hspace{.5in}\mbox{($\phi$ is an isometry)}\\
& = \sum_{i=1}^{n}
\<\tk_{\emptyset}\pi_{\emptyset}(\tk_{\emptyset}\epsilon_i),\epsilon_i\>+
\sum_{i=1}^{n}
\<s\tk_{\emptyset}\pi_{\emptyset}(\tk_{\emptyset}s\epsilon_i),\epsilon_i\>\\
&\hspace{.5in}\mbox{($s$ and $\tk_{\emptyset}$ are self-adjoint)}\\
& = \sum_{i=1}^{n}
\<\pi_{\emptyset}(\tk_{\emptyset}^2\epsilon_i),\epsilon_i\>+
\sum_{i=1}^{n}
\<\pi_{\emptyset}(s\tk_{\emptyset}^2s\epsilon_i),\epsilon_i\>
&&\\
&\hspace{.5in}\mbox{($\pi_{\emptyset}$ is $\NW$-equivariant)}\\
& = \sum_{i=1}^{n}
\<\pi_{\emptyset}(\tk_{\emptyset}\epsilon_i),\epsilon_i\>+
\sum_{i=1}^{n}
\<\pi_{\emptyset}(\tk_{\emptyset}\epsilon_i),\epsilon_i\>\\
&\hspace{.5in}\mbox{($\tk_{\emptyset}$ is a central idempotent and $s^2=1$)}\\
&= 2\dim^{\bq}_WV_{\emptyset}.
\end{flalign*}

\end{proof}

\section{Kernels of $\bbR G$ and $\bbR W$-matrices}

In this section, we consider only those $\NG$-modules (respectively,
$\NW$-modules) that are given by kernels of right multiplication by
$\bbR G$-matrices (resp., $\bbR W$-matrices).  The fundamental fact
that our arguments rely on is that the submodules
$K_+,K_-,K_{\emptyset}\subseteq\LG$ are irreducible in the sense that right
multiplication by an element of $\bbR G$ is either the zero map or an
isomorphism.  For $K_+$ and $K_-$ this is obvious since they are each
spanned by a single vector, but for $K_{\emptyset}$ we need the fact that
there are no other $st$-eigenvectors in $\LG$.

\begin{proposition}\label{prop:Virred}
For any element $y\in\bbR G$, let
$R_y:K_{\emptyset}\rightarrow K_{\emptyset}$ denote
(right) multiplication by $y$.  Then 
\[\ker R_y=\left\{\begin{array}{ll}
K_{\emptyset}& \mbox{if $y=0$,}\\
0 & \mbox{if $y\neq 0$.}\end{array}\right.\]
\end{proposition}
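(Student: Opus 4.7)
The plan is to exploit the fact that $\bbR G$ is commutative---it is isomorphic to the Laurent polynomial ring $\bbR[z,z^{\pm 1}]$ via $st\mapsto z$---together with Theorem~\ref{thm:W-eigen}. Any non-zero $y\in\bbR G$ factors over $\bbC$ as
\[y=c\,(st)^{-k}\prod_{i=1}^{n}(st-\lambda_i)\]
for some $c\in\bbR^{\times}$, $k\in\bbZ_{\geq 0}$, and $\lambda_i\in\bbC$. The unit factor $c\,(st)^{-k}$ acts as a bijection on $K_{\emptyset}$, so showing $\ker R_y=0$ reduces to showing that $\prod_i R_{st-\lambda_i}$ has trivial kernel. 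Since $y$ has real coefficients, it suffices to prove the corresponding statement on the complexification $K_{\emptyset}^{\bbC}=K_{\emptyset}\otimes_{\bbR}\bbC$ equipped with the sesquilinear extension of $\<,\>_{\bq}$, because a real element that lies in the kernel over $\bbR$ also lies in the kernel over $\bbC$.

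Next I would observe that $K_{\emptyset}^{\bbC}$ is invariant under each $R_{st-\lambda_i}$. This in turn reduces to the $R_{st}$-invariance of $K_{\emptyset}$, which is automatic: $R_{st}$ is unitary, $K_{+}$ and $K_{-}$ are its $(\pm 1)$-eigenspaces inside the $R_{st}$-invariant subspace $\LG$, and hence their orthogonal complement $K_{\emptyset}$ is also $R_{st}$-invariant. Given $x\in K_{\emptyset}^{\bbC}$ with $x\prod_i(st-\lambda_i)=0$, I then strip off factors one at a time: let $j$ be the least index for which the partial product $x\prod_{i\leq j}(st-\lambda_i)$ vanishes, and set $z=x\prod_{i<j}(st-\lambda_i)$. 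The vector $z\in K_{\emptyset}^{\bbC}$ is non-zero and satisfies $z(st-\lambda_j)=0$; in other words, $z$ is a $\lambda_j$-eigenvector of right multiplication by $st$.

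The proof concludes by invoking Theorem~\ref{thm:W-eigen}: the only eigenvalues of $R_{st}$ on $\LW^{\bbC}$ are $\pm 1$, and the corresponding eigenspaces are spanned by $\bk_+$ and $\bk_-$ respectively. The recurrence analysis of Section~\ref{s:GonLW} is already carried out over $\bbC$ (only $|\lambda|=1$ is used), so the theorem applies verbatim to complex eigenvectors. Consequently $z$ lies in $(K_{+}\oplus K_{-})^{\bbC}$, which is orthogonal to $K_{\emptyset}^{\bbC}$; hence $z=0$, contradicting the choice of $j$, and therefore $x=0$. The main conceptual step---and the only real obstacle---is recognizing that the classification of Theorem~\ref{thm:W-eigen} covers \emph{all} complex eigenvectors and not merely the real ones; once this is accepted, the proposition is essentially the elementary fact that a non-zero Laurent polynomial in a single unitary operator acts injectively on any closed invariant subspace disjoint from the point spectrum.
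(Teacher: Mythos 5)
Your argument is correct and is essentially the paper's own proof: factor the nonzero Laurent polynomial $y$ over $\bbC$ into a unit times linear factors $(st-\lambda_i)$, and observe that a nonzero element of $\ker R_y$ would force some factor to have a nontrivial kernel in $K_{\emptyset}$, contradicting Theorem~\ref{thm:W-eigen}. You merely spell out details the paper leaves implicit (the complexification and the ``strip off the first vanishing partial product'' step), and your observation that the eigenvalue analysis of Section~\ref{s:GonLW} is already carried out over $\bbC$ is exactly the right justification.
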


\begin{proof}{}
Since $G$ is infinite cyclic generated by $st$, $y$ is a Laurent
polynomial in $st$, hence can be factored as 
\[y=C\cdot(st)^{-n}\cdot p(st)\]
where $n$ is an integer, $C$ is a nonzero real constant, and $p(z)$ is a
polynomial in $z$ with real coefficients.  Factoring this polynomial
gives 
\[y=C\; (st)^{-n}\;(st-\lambda_1)\cdots(st-\lambda_k),\]
where the $\lambda_i\in\bbC$ are the roots of $p(z)$. 
If $R_y(x)=0$ for some nonzero $x\in K_{\emptyset}$, then at least one of the
linear factors $(st-\lambda_i)$ must have nontrivial kernel,
contradicting Theorem~\ref{thm:W-eigen}.
\end{proof}
 
Now we suppose $M$ is an $(m\times n)$-matrix
with $\bbR G$-entries.  We let $R_M:\LG^m\rightarrow\LG^n$ denote
right multiplication by $M$.  Then $\ker R_M$ is a left $\NG$-module,
hence, by Proposition~\ref{prop:V-G-decomp}, decomposes as 
\[\ker R_M=(\ker R_M)_+\oplus(\ker R_M)_-\oplus(\ker
R_M)_{\emptyset}.\]
Moreover, each summand can be regarded as the kernel of right
multiplication by $M$ on the corresponding invariant subspace of
$\LG^m=(K_+)^m\oplus (K_-)^m\oplus (K_{\emptyset})^m$.  More precisely, if
$R_M^+:(K_+)^m\rightarrow (K_+)^m$, $R_M^-:(K_-)^m\rightarrow (K_-)^m$,
and $R_M^{\emptyset}:(K_{\emptyset})^m\rightarrow (K_{\emptyset})^m$ each denotes right multiplication
by the matrix $M$, then 
\[(\ker R_M)_+=\ker R_M^+,\;\; (\ker
R_M)_-=\ker R_M^-,\;\;\mbox{and}\;  (\ker
R_M)_{\emptyset}=\ker R_M^{\emptyset}.\]

\begin{lemma}\label{lem:G-main}
Let $M$ be a matrix with $\bbR G$-entries, and let $R_M^+$, $R_M^-$,
and $R_M^{\emptyset}$ denote right multiplication by $M$ on $(K_+)^m$,
$(K_-)^m$, and $(K_{\emptyset})^m$, respectively.  Then there exist  $\NG$-module
isomorphisms 
\[\ker R_M^+  \cong (K_+)^{a},\;\;\;\;\ker R_M^-  \cong (K_-)^{b},\;\;\;\;\mbox{and}\;\;
\ker R_M^{\emptyset}  \cong (K_{\emptyset})^{c},\]
for some choice of integers
$a,b,c\in\{0,1,\ldots,m\}$. 
\end{lemma}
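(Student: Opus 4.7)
The plan is to exploit the fact that $\bbR G$, being isomorphic to the Laurent polynomial ring $\bbR[z,z^{-1}]$ via $st\mapsto z$, is a principal ideal domain, so every $m\times n$ matrix over $\bbR G$ admits a Smith normal form. Concretely, one finds invertible matrices $P\in GL_m(\bbR G)$ and $Q\in GL_n(\bbR G)$ such that $D:=PMQ$ is diagonal with diagonal entries $d_1,\dots,d_r,0,\dots,0$ and each $d_i\neq 0$. Since $\bbR G\subset\NG$ (each $w\in G$ acts unitarily on $\LG$), the matrices $P$ and $Q$ implement bounded $\NG$-equivariant invertible operators $R_P$ and $R_Q$ on $\LG^m$ and $\LG^n$ respectively, and from the identity $R_D=R_Q\circ R_M\circ R_P$ one concludes that $R_P$ carries $\ker R_D$ isomorphically onto $\ker R_M$.

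Next I would check that this isomorphism respects the orthogonal decomposition from Proposition~\ref{prop:V-G-decomp}. Since the projections $\tk_+$, $\tk_-$, $\tu$ are central in $\NG$ (Proposition~\ref{prop:idempotents}), left multiplication by any element of $\NG$, and in particular by any matrix over $\bbR G$, preserves each of the summands $K_+^{\bullet}$, $K_-^{\bullet}$, $K_{\emptyset}^{\bullet}$. Therefore $R_P$ restricts to an $\NG$-module isomorphism $\ker R_D^{?}\to\ker R_M^{?}$ for each $?\in\{+,-,\emptyset\}$, and we have reduced to the case $M=D$ diagonal.

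For such a $D$, right multiplication acts coordinatewise: the $j$-th coordinate of $xD$ equals $x_jd_j$ for $j\le r$ and vanishes for $j>r$. On $K_{\emptyset}^m$, Proposition~\ref{prop:Virred} says each $R_{d_j}$ with $d_j\ne 0$ is injective on $K_{\emptyset}$, so the kernel forces the first $r$ coordinates to vanish and we obtain $\ker R_D^{\emptyset}\cong K_{\emptyset}^{m-r}$. On $K_+^m$ and $K_-^m$, one-dimensionality makes $R_{d_j}$ multiplication by the scalar $\chi_{\pm}(d_j)$, where $\chi_{\pm}:\bbR G\to\bbR$ is the character sending $d(st)$ to $d(\pm 1)$ in the identification $\bbR G\cong\bbR[z^{\pm 1}]$; each such scalar is either zero or a bijection on the relevant one-dimensional space. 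Setting $a'=|\{j\le r:\chi_+(d_j)\ne 0\}|$ and $b'=|\{j\le r:\chi_-(d_j)\ne 0\}|$, we get $\ker R_D^{+}\cong K_+^{m-a'}$ and $\ker R_D^{-}\cong K_-^{m-b'}$, while $\ker R_D^{\emptyset}\cong K_{\emptyset}^{m-r}$. All three exponents lie in $\{0,1,\dots,m\}$, yielding the lemma with $a=m-a'$, $b=m-b'$, $c=m-r$.

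The main obstacle is nothing deep: it is the bookkeeping needed to verify that $R_P$ and $R_Q$ really are $\NG$-module automorphisms that preserve the eigenspace summands, and that Proposition~\ref{prop:Virred} applies entrywise to the diagonalized matrix. Both of these hinge on the centrality of the projections $\tk_\pm,\tu$ together with the irreducibility assertion of Proposition~\ref{prop:Virred}, which are precisely what the preceding sections have put in place.
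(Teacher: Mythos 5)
Your proposal is correct and follows essentially the same route as the paper: reduce to a diagonal matrix via the PID structure of $\bbR G$ (the paper first clears denominators to land in $\bbR[z]$ and pads to a square matrix, while you invoke the Smith normal form over $\bbR[z,z^{-1}]$ directly), then apply Proposition~\ref{prop:Virred} and the one-dimensionality of $K_{\pm}$ entrywise. Your explicit verification that the conjugating matrices preserve the summands via centrality of $\tk_{\pm},\tu$ is a point the paper leaves implicit, but the argument is the same.
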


\begin{proof}{}
Adding a zero column to $M$ does not effect the kernel of $R_M^+$,
$R_M^-$, or $R_M^{\emptyset}$, and adding a zero row only alters the
kernel by a free summand of $K_+$, $K_-$, or $K_{\emptyset}$,
respectively.  We can therefore assume that $M$ is a square matrix of
size $m\times m$.  The entries of $M$ are elements of $\bbR G$, which
we regard as the ring of Laurent polynomials in $z=st$ over $\bbR$.
Since right multiplication by $z=st$ (a unitary operator on
$\LG^n$) defines an $\NG$-module automorphism of $(K_+)^m$, $(K_-)^m$, and
$(K_{\emptyset})^m$, resp., we can multiply $M$ by any power of
$z$ without changing the kernel of $R_M^+$, $R_M^-$, or
$R_M^{\emptyset}$, resp.  Thus, we can assume that $M$ has polynomial
entries. Since polynomials over $\bbR$ form a principal
ideal domain, we can multiply $M$ on the right and left by invertible
matrices (over $\bbR G$) to obtain a diagonal matrix.  Hence the proof
of the lemma reduces to the case where $M$ is a diagonal matrix
$\diag(y_1,\ldots,y_m)$.  Finally we simply recall, from
Proposition~\ref{prop:Virred} and the paragraph preceding it, 
that right multiplication on $K_+$, $K_-$, or $K_{\emptyset}$ by any
element $y_i\in\bbR G$ is either an isomorphism or the zero map.  The
result follows.
\end{proof}

Finally, we consider $\NW$-modules that are
kernels of $\bbR W$-matrices.  Let $M$ be an $(m\times n)$-matrix
with $\bbR W$-entries, and let $R_M:(\LW)^m\rightarrow(\LW)^n$ denote
right multiplication by $M$.  As in the case of $\bbR G$-matrices, we
obtain a decomposition of left $\NW$-modules:
\begin{align}\label{al:kerRM-decomp}
\ker R_M &=\ker R_M^+\oplus \ker R_M^-\oplus \ker
R_M^{\emptyset},
\end{align}
where $R_M^+:(K_+)^m\rightarrow (K_+)^m$, $R_M^-:(K_-)^m\rightarrow (K_-)^m$,
and $R_M^{\emptyset}:(K_{\emptyset}\oplus K_{\emptyset}s)^m\rightarrow (K_{\emptyset}\oplus K_{\emptyset}s)^m$ each
denotes right multiplication by the matrix $M$.  These three summands
are also left $\NG$-modules, however, in order to use
Lemma~\ref{lem:G-main} , we need to know that as $\NG$-modules they
are isomorphic to kernels of $\bbR G$-matrices.  

\begin{lemma}\label{lem:kerW2G}
Let $M$ be an $(m\times n)$-matrix with entries in $\bbR W$.  Then
there exist $(m\times n)$-matrices $M_+$ and $M_-$, and a
$(2m\times 2n)$-matrix $M_{\emptyset}$ all with entries in $\bbR G$ such
that as $\NG$-modules,
\[\ker R_M^+\cong \ker R_{M_+},\;\; \ker R_M^-\cong \ker R_{M_-},\;\;\mbox{and}\;\; 
\ker R_M^{\emptyset}\cong \ker R_{M_{\emptyset}},\]
where $R_{M_+}$ denotes right-multiplication by
$M_+$ on $(K_+)^m$, $R_{M_-}$ denotes right-multiplication by
$M_-$ on $(K_-)^m$, and $R_{M_{\emptyset}}$ denotes right-multiplication
by $M_{\emptyset}$ on $(K_{\emptyset})^{2m}$.  
\end{lemma}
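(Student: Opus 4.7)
The plan is to exploit the coset decomposition $W = G \sqcup Gs$, which identifies $\bbR W = \bbR G \oplus (\bbR G)s$ as a right $\bbR G$-module. Accordingly, I would write $M = A + Bs$, where $A$ and $B$ are $m \times n$ matrices with entries in $\bbR G$ and $Bs$ means the matrix obtained by multiplying each entry of $B$ on the right by $s$. The three summands of $\ker R_M$ will then be handled by showing that, after appropriate identifications, $R_M$ acts on each through right multiplication by an explicit $\bbR G$-matrix built from $A$ and $B$.

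For the summand $K_+^m$: by Proposition~\ref{prop:Wfixesks}, right multiplication by $s$ acts on $K_+$ by a scalar $\sigma_+\in\{+1,-1\}$ (with sign determined by whether $q_sq_t<1$ or $>1$). Since $K_+\subseteq\LG$ is closed under right multiplication by $\bbR G$, for any $x\in K_+^m$ one has $xM = xA + (xB)s = xA + \sigma_+(xB) = x(A+\sigma_+B)$. Setting $M_+ := A + \sigma_+ B$, right multiplication by $M$ on $K_+^m$ literally coincides with right multiplication by the $\bbR G$-matrix $M_+$, so $\ker R_M^+ = \ker R_{M_+}$ as $\NG$-modules. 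The identical argument with the scalar $\sigma_-\in\{+1,-1\}$ from Proposition~\ref{prop:Wfixesks} gives $M_- := A + \sigma_- B$ and $\ker R_M^- = \ker R_{M_-}$.

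For the summand $(K_\emptyset\oplus K_\emptyset s)^m$: use the coordinate-wise $\bbR G$-equivariant isometric isomorphism
\[
\phi:(K_\emptyset\oplus K_\emptyset s)^m\longrightarrow K_\emptyset^{2m}, \qquad \phi(x+ys)=(x,y),
\]
exactly as in the proof of Lemma~\ref{lem:dimG2W}, and conjugate $R_M$ by $\phi$. The essential ingredient is the identity $sC = \widetilde{C}s$ for $C\in\bbR G$, where $\widetilde{C}$ is the image of $C$ under the $\bbR$-algebra automorphism of $\bbR G$ induced by conjugation $g\mapsto sgs$; since $s(st)s = (st)^{-1}$, this is just the Laurent-polynomial involution $z\mapsto z^{-1}$. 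Applying it to $(x+ys)(A+Bs)$ yields
\[
(x+ys)(A+Bs) = (xA + y\widetilde{B}) + (xB + y\widetilde{A})s,
\]
so $\phi R_M \phi^{-1}$ is right multiplication by the $2m\times 2n$ block matrix
\[
M_\emptyset = \begin{pmatrix} A & B \\ \widetilde{B} & \widetilde{A} \end{pmatrix}
\]
whose entries lie in $\bbR G$. Hence $\ker R_M^\emptyset \cong \ker R_{M_\emptyset}$ as $\NG$-modules, restricted to $K_\emptyset^{2m}\subseteq\LG^{2m}$.

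The main obstacle is the $K_\emptyset$ case, where the off-diagonal tildes in $M_\emptyset$ have to be tracked carefully: commuting $s$ past any $\bbR G$-coefficient produces the $s$-twist $C\mapsto\widetilde{C}$, and one must check that this twist is an automorphism of $\bbR G$ (so that $M_\emptyset$ indeed has $\bbR G$-entries) and that $\phi$ is $\bbR G$-equivariant for the diagonal left action (so that the isomorphism is an $\NG$-module isomorphism). Both are immediate from the structure of $G$ as the index-two cyclic subgroup of $W$, so once the block-matrix form is set up the lemma follows.
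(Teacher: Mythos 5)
Your proposal is correct and follows essentially the same route as the paper: decompose $M=A+Bs$ along the coset decomposition $W=G\sqcup Gs$, use the sign action of $s$ on $K_{\pm}$ to collapse to an $\bbR G$-matrix, and use the twist $z\mapsto z^{-1}$ coming from $s(st)s=(st)^{-1}$ to produce the $2\times 2$ block matrix on $K_{\emptyset}^{2m}$. The only cosmetic difference is that the paper records $M_{+}=M_1(z)\pm M_2(z^{-1})$ where you write $A+\sigma_{+}B$; these define the same operator on $K_{+}^m$ since right multiplication by $z$ is the identity there.
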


\begin{proof}{}
Any element $y$ in $\bbR W$ can be written in the form
$y=y_1(z)+y_2(z)s$ where $y_1(z)$ and $y_2(z)$ are Laurent polynomials
in $z=st$.  Moreover, since $(st)^ns=s(ts)^n=s(st)^{-n}$, any Laurent
polynomial $f(z)\in\bbR G$ satisfies the relation $f(z)s=sf(z^{-1})$
in $\bbR W$.  These same properties hold for any matrix $M$ with $\bbR
W$ entries.  Given such a matrix $M$, we let $M=M_1(z)+M_2(z)s$ where
$M_1(z)$ and $M_2(z)$ are $(m\times n$)-matrices with entries in $\bbR
G$.  Given $x\in (K_+)^m$, we have $x=x\tk_+$, so 
\begin{align*}
xM & =x\tk_+(M_1(z)+M_2(z)s)\\
&=x\tk_+M_1(z)+x\tk_+sM_2(z^{-1})\\
&=x\tk_+M_1(z)\pm x\tk_+M_2(z^{-1})\hspace{.3in}\mbox{(sign depending
on $\bq$)}\\
&=x\tk_+(M_1(z)\pm M_2(z^{-1}))\\
&=x(M_1(z)\pm M_2(z^{-1})).
\end{align*}
In other words, right multiplication by $M$ on $(K_+)^m$ is the same as
right multiplication by $M_1(z)\pm M_2(z^{-1}))$, which has entries in
$\bbR G$.  Letting $M_+$ be the matrix $M_+= M_1(z)\pm M_2(z^{-1}))$,
we therefore have $\ker R_M^+\cong \ker R_{M_+}$, as desired.
A similar argument works for $R_M^-$ acting on $(K_-)^m$.  

For $x\in(K_{\emptyset}\oplus K_{\emptyset}s)^m$, we express it as
$x=x_1+x_2s$ where $x_1,x_2\in (K_{\emptyset})^m$.  Then 
\begin{align*}
xM &= (x_1+x_2s)(M_1(z)+M_2(z)s)\\
&=x_1(M_1(z)+M_2(z)s)+x_2s(M_1(z)+M_2(z)s)\\
&= x_1M_1(z)+x_1M_2(z)s+x_2M_1(z^{-1})s+x_2M_2(z^{-1})\\
&= [x_1M_1(z)+x_2M_2(z^{-1})]+[x_1M_2(z)+x_2M_1(z^{-1})]s.
\end{align*}
It follows that if we identify $(K_{\emptyset}\oplus
K_{\emptyset}s)^m$ with $(K_{\emptyset})^m\oplus (K_{\emptyset})^m$ (using
the $\NG$-isomorphism $x_1+x_2s\mapsto (x_1,x_2)$), then right
multiplication by $M$ corresponds to right multiplication by the
$(2m\times 2n)$ block matrix
\[M_{\emptyset}=\left[\begin{array}{cc}
M_1(z) & M_2(z)\\
M_2(z^{-1}) & M_1(z^{-1})\end{array}\right].\]
Hence the two matrices $M$ and $M_{\emptyset}$ will have isomorphic
kernels (as $\NG$-modules).
\end{proof}

We now prove the main theorem of the paper.

\begin{theorem}\label{thm:final}
If $M$ is any $(m\times n)$-matrix with entries in $\bbR W$ and
 $R_M:(\LW)^m\rightarrow(\LW)^n$ denotes right multiplication by $M$, then  
\[\dim_W^{\bq}\ker R_M=n_{\emptyset}+\frac{n_s}{1+q_s}+\frac{n_t}{1+q_t}\]
where $n_{\emptyset},n_s,n_t$ are piecewise constant integer functions
of $q_s$ and $q_t$ with jumps only along the curves $q_s=q_t$ and
$q_sq_t=1$.
\end{theorem} 

\begin{proof}{}
By (\ref{al:kerRM-decomp}), we have 
\[\dim_W^{\bq}\ker R_M=\dim_W^{\bq}\ker R_M^++\dim_W^{\bq}
\ker R_M^-+\dim_W^{\bq} \ker R_M^{\emptyset},\]
hence by Lemma~\ref{lem:dimG2W}, we have 
\begin{align}\label{al:dimker}
\dim_W^{\bq}\ker R_M & =\dim_G^{\bq} \ker R_M^++\dim_G^{\bq}
\ker R_M^-+\frac{1}{2}\dim_G^{\bq} \ker R_M^{\emptyset}.
\end{align}
By Lemma~\ref{lem:kerW2G}, all of these $\NG$-modules are isomorphic
to kernels of $\bbR G$-matrices, hence by Lemma~\ref{lem:G-main}, we have 
\begin{align}
\nonumber\dim_G^{\bq}\ker R_M^+ & =\dim_G^{\bq} (K_+)^a ,\\
\label{al:dimabc}\dim_G^{\bq}\ker R_M^- & =\dim_G^{\bq} (K_-)^b,\\
\nonumber\dim_G^{\bq}\ker R_M^{\emptyset} & =\dim_G^{\bq} (K_{\emptyset})^c
\end{align}
for some integers $a,b,c$.  Note that these integers are constant with
respect to the parameter $\bq$.  Combining (\ref{al:dimker}) and 
(\ref{al:dimabc}) we have  
\[\dim_W^{\bq}\ker R_M=a\cdot\dim^{\bq}_G K_+ +b\cdot\dim^{\bq}_G K_-
+\frac{c}{2}\cdot\dim^{\bq}_G K_{\emptyset},\]
and the theorem then follows from Lemma~\ref{lem:Gdims}.
\end{proof}

\bibliographystyle{amsplain}

\providecommand{\bysame}{\leavevmode\hbox to3em{\hrulefill}\thinspace}
\providecommand{\MR}{\relax\ifhmode\unskip\space\fi MR }
\providecommand{\MRhref}[2]{%
  \href{http://www.ams.org/mathscinet-getitem?mr=#1}{#2}
}
\providecommand{\href}[2]{#2}

\end{document}